\theoremstyle{plain}
\newtheorem{theorem}{Theorem}[section]
\newtheorem{prop}[theorem]{Proposition}
\newtheorem{lem}[theorem]{Lemma}
\newtheorem{corol}[theorem]{Corollary}
\theoremstyle{definition}
\newtheorem{defi}[theorem]{Definition}
\newtheorem{rmq}[theorem]{Remark}
\newtheorem{exmp}[theorem]{Example}
\def\<{\left<}
\def\>{\right>}
\def\d{{\partial}}
\def\k{\mathbf{k}}
\def\ens#1{\left\{ #1 \right\}}
\def\fl{{\longrightarrow}\,}
\def\TT{{\mathcal{T}}}
\def\A{{\mathbf{A}}}
\def\E{{\mathbf{E}}}
\def\T{{\mathbf{T}}}
\def\b#1{\overline{#1}}
\def\N{{\mathbb{N}}}
\def\Z{{\mathbb{Z}}}
\def\1{\mathbbm{1}}
\def\x{{\mathbf x}}
\def\ens#1{\left\{ #1 \right\}}
\def\im{{\rm{im}\,}}
\def\1{\mathbbm{1}}
\def\M{\mathbf M}
\def\MCG{\mathrm{MCG}}
\def\Prufer#1#2#3#4{
	\coordinate (x) at (#1,#2);
	\fill (x) circle (.1);
	\draw[#4] (x) .. controls (#1,#2+2.5) and (#1+.5,#2+3) .. (#1+#3,#2+3);
}
\def\adic#1#2#3#4{
	\coordinate (x) at (#1,#2);
	\fill (x) circle (.1);
	\draw[#4] (x) .. controls (#1,#2+2.5) and (#1-.5,#2+3) .. (#1-#3,#2+3);
}
\def\hadic#1#2#3#4{
	\coordinate (x) at (#1,#2);
	\fill (x) circle (.1);
	\draw[#4] (x) .. controls (#1,#2-2.5) and (#1+.5,#2-3) .. (#1+#3,#2-3);
}
\def\hPrufer#1#2#3#4{
	\coordinate (x) at (#1,#2);
	\fill (x) circle (.1);
	\draw[#4] (x) .. controls (#1,#2-2.5) and (#1-.5,#2-3) .. (#1-#3,#2-3);
}
\title[Compactifying Exchange Graphs]{Compactifying Exchange Graphs I, \\ Annuli and tubes} 
\author{K.~Baur and G.~Dupont}
\address{Karl-Franzens-Universit\"at Graz, Institute for Mathematics and Scientific Computing, Heinrichstrasse 36, 8010 Graz, Austria}
\email{karin.baur@uni-graz.at}
\address{Universit\'e des Antilles et de la Guyane, IUFM de Guadeloupe, Morne Ferret, 97178 Les Abymes cedex, France.}
\email{gdupont@iufm.univ-ag.fr}
\date{\today}
\begin{document}

\begin{abstract}
	We introduce the notion of an \emph{asymptotic triangulation} of the annulus. We show that asymptotic triangulations can be mutated as the usual triangulations and describe their exchange graph. 
	Viewing asymptotic triangulations as limits of triangulations under the action of the mapping class group, we compactify the exchange graph of the triangulations of the annulus. The cases of tubes are also considered. 
\end{abstract}

\maketitle

\setcounter{tocdepth}{1}
\tableofcontents

\section*{Introduction}
	The \emph{mutation} of a triangulation in a marked surfaces is a combinatorial construction which plays a prominent role in Teichm\"uller theory, as it first appeared in the seminal works of Penner \cite{Penner:lambda,Penner:bordered} and Fock \cite{Fock:dual}. Inspired by these ideas, Fomin, Shapiro and Thurston initiated a systematic study of these mutations for arbitrary marked surfaces using the concept of cluster combinatorics, see \cite{FST:surfaces, FT:surfaces2}. This led in particular to the notion of \emph{exchange graph of triangulations} of a marked surface.

	Cluster structures also present a very strong interplay with representation theory of algebras. From this perspective, the exchange graphs encode the combinatorics of the (cluster-)tilting objects in suitable triangulated 2-Calabi-Yau categories, see for instance \cite{Reiten:TiltingCluster}. In the particular case of the so-called \emph{tubes}, Buan and Krause enriched this combinatorial complex by considering tilting objects in an appropriate completion of the category \cite{BuanKrause:cotilting}. 
	
	The aim of this article is to propose a combinatorial version of this completion at the level of triangulations of marked surfaces, and more precisely in the context of unpunctured annuli and tubes (the latter being unpunctured annuli with marked points on one boundary component only). 
	
	We therefore introduce a notion of \emph{asymptotic triangulation} of an unpunctured marked surface which enriches the complex of triangulations. We prove that these asymptotic triangulations can be mutated as the usual triangulations. This leads to a new polytope associated with the surface, called \emph{the boundary of the exchange graph}. We finally prove that this boundary can naturally be viewed as the boundary of a compactification of the usual exchange graph of triangulations under the action of the mapping class group.
	

\section*{Notations}
	Let $p,q \geq 0$ and let $C_{p,q}$ denote the annulus with $p$ points marked on a boundary component and $q$ points marked on the other. We denote by $\M$ the set of marked points on the boundary of $C_{p,q}$. Let $z$ denote a non-contractible closed curve in the interior of $C_{p,q}$ so that $\pi_1(C_{p,q}) \simeq \Z = \<z\>$.

	Let $D_z$ denote the positive \emph{Dehn twist} with respect to $z$ and $\MCG(C_{p,q}) = \<D_z\> \simeq \Z$ be the \emph{mapping class group} of $C_{p,q}$.

	Two curves in $C_{p,q}$ are called \emph{compatible} if they do not intersect except possibly at a common extremity which is a marked point. 
	Two isotopy classes of curves are called \emph{compatible} if there exist compatible representatives. 

	We let $\A(C_{p,q})$ denote the set of isotopy classes of (non-boundary) \emph{arcs} in $C_{p,q}$, that is, of curves joining two marked points which are compatible with themselves and which do not connect neighbouring marked points on the boundary. 
	Abusing terminology, an element in $\A(C_{p,q})$ is called an arc. We let $\T(C_{p,q})$ denote the set of triangulations of $C_{p,q}$, that is, the set of maximal collections of pairwise distinct compatible arcs.

	If $p,q \geq 1$, given a triangulation $T \in \T(C_{p,q})$, we denote by $Q_T$ the corresponding quiver, see \cite{FST:surfaces}.

	$\k$ is a field.
	
\section{Asymptotic triangulations of the annulus}\label{section:annulus}
		We first assume $p,q\ge 1$ so that $C_{p,q}$ is an unpunctured marked surface in the sense of \cite{FST:surfaces}; the case $q=0$ will be considered later (cf. Remark \ref{rmq:q=0}). We denote by $\d$ and $\d'$ its two boundary components such that $\d$ contains $p$ marked points and $\d'$ contains $q$ marked points. The orientation on $C_{p,q}$ induces an orientation on each boundary component. Following these orientations, we denote by $m_1, \ldots, m_p$ the marked points on $\d$ and by $m'_1, \ldots, m'_q$ the marked points on $\d'$.

		\begin{defi}[Peripheral and bridging arcs]
			An arc in $\A(C_{p,q})$ is called \emph{peripheral} if its two extremities are on the same boundary components. It is called \emph{bridging} otherwise. 
		\end{defi}

	Given a pair $\ens{m_1,m_2}$ of marked points on the same boundary component, there exist at most two peripheral arcs with these marked points as extremities. We denote by $(m_1,m_2)$ the unique arc which is homotopic to the boundary component joining $m_1$ to $m_2$ in this order following the orientation of the boundary, see Figure \ref{fig:arcsCpq}.

	\begin{defi}[Pr\"ufer and adic curves]
		Given a marked point $m$ in $C_{p,q}$, we let~:
		\begin{itemize}
			\item $\pi_m$ be the isotopy class of the simple curve starting at $m$ and spiraling positively around the annulus, the \emph{Pr\"ufer curve} at $m$;
			\item $\alpha_m$ be the isotopy class of the simple curve starting at $m$ and spiraling negatively around the annulus, the \emph{adic curve} at $m$.
		\end{itemize}

		The set of \emph{asymptotic arcs} is
		$$\overline \A(C_{p,q}) = \A(C_{p,q}) \sqcup \ens{\pi_m,\alpha_m \ | \ m \in \M}.$$
		
		An asymptotic arc is called \emph{strictly asymptotic} if it does not belong to $\A(C_{p,q})$.
	\end{defi}

	\begin{figure}[htb]
		\begin{center}
			\begin{tikzpicture}[scale = .5]
				\tikzstyle{every node} = [font = \small]
				\foreach \x in {0}
				{
					\foreach \y in {-8}
					{
						\draw[<-] (\x-5,\y+4) -- (\x+5,\y+4);
						\fill (\x-6,\y+4) node {$\d'$};
						\draw[->] (\x-5,\y-4) -- (\x+5,\y-4);
						\fill (\x-6,\y-4) node {$\d$};

						\foreach \t in {-4,-3,...,4}
						{
							\fill (\x+\t,\y+4) circle (.1);
							\fill (\x+\t,\y-4) circle (.1);
						}

						\fill (\x-4,\y+4) node [above] {$m'_1$};
						\fill (\x-3,\y+4) node [above] {$m'_{q}$};
						\fill (\x-2,\y+4) node [above] {$\cdots$};
						\fill (\x-1,\y+4) node [above] {$\cdots$};
						\fill (\x,\y+4) node [above] {$\cdots$};
						\fill (\x+1,\y+4) node [above] {$\cdots$};
						\fill (\x+2,\y+4) node [above] {$\cdots$};
						\fill (\x+3,\y+4) node [above] {$m'_2$};
						\fill (\x+4,\y+4) node [above] {$m'_1$};

						\fill (\x-4,\y-4) node [below] {$m_1$};
						\fill (\x-3,\y-4) node [below] {$m_2$};
						\fill (\x-2,\y-4) node [below] {$\cdots$};
						\fill (\x-1,\y-4) node [below] {$\cdots$};
						\fill (\x,\y-4) node [below] {$\cdots$};
						\fill (\x+1,\y-4) node [below] {$\cdots$};
						\fill (\x+2,\y-4) node [below] {$\cdots$};
						\fill (\x+3,\y-4) node [below] {$m_p$};
						\fill (\x+4,\y-4) node [below] {$m_1$};

						\hadic{\x+4}{\y+4}{2}{}
						\fill (\x,\y+1) node [above] {$\alpha_{m'_1}$};
						\hadic{\x-4}{\y+4}{9}{}

						\Prufer{\x-4}{\y-4}{9}{}
						\fill (\x,\y-1) node [below] {$\pi_{m_1}$};
						\Prufer{\x+4}{\y-4}{1}{}

						\draw[dashed,gray] (\x-4,\y+4) -- (\x-4,\y-4);
						\draw[dashed,gray] (\x+4,\y+4) -- (\x+4,\y-4);

						\draw (\x-3,\y+4) .. controls (\x-2,\y+3) and (\x+2,\y+3) .. (\x+3,\y+4);
						\fill (\x,\y+3.3) node [below] {\tiny $(m'_2,m'_q)$};

						\draw (\x-3,\y-4) .. controls (\x-2,\y-3) and (\x+2,\y-3) .. (\x+3,\y-4);
						\fill (\x,\y-2.7) node [] {\tiny $(m_2,m_p)$};
					}
				}
			\end{tikzpicture}
		\end{center}
		\caption{Asymptotic arcs in the annulus.}\label{fig:arcsCpq}
	\end{figure}
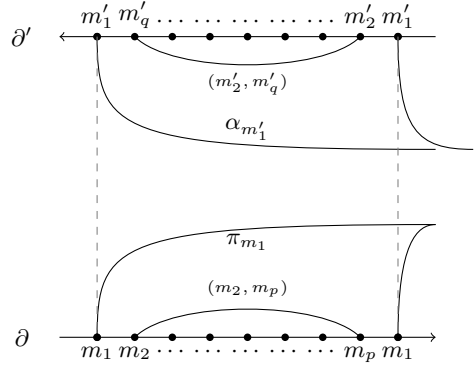

With these notions, we can now define asymptotic triangulations of the annulus. 

		\begin{defi}[Asymptotic triangulations of the annulus]
			An \emph{asymptotic triangulation of the annulus} is a maximal collection of pairwise distinct and compatible asymptotic arcs in $\overline \A(C_{p,q})$. We denote by $\b\T(C_{p,q})$ the set of asymptotic triangulations.

			An asymptotic triangulation $T \in \b\T(C_{p,q})$ is called \emph{strictly asymptotic} if it is not in $\T(C_{p,q})$.
		\end{defi}

		\begin{defi}[Asymptotic exchange graph]
			The \emph{asymptotic exchange graph} is the unoriented graph $\b\E(C_{p,q})$ whose vertices are the asymptotic triangulations in $\b\T(C_{p,q})$ and where two asymptotic triangulations $T,T' \in \b\T(C_{p,q})$ are joined by an edge if and only if they differ by a single asymptotic arc.
		\end{defi}

		Note that a bridging arc is not compatible with any strictly asymptotic arc. Therefore, a strictly asymptotic triangulation does not contain any bridging arcs.

	\subsection{Partial asymptotic triangulations based at a boundary}
		Before we study asymptotic triangulations of the annulus, we focus on partial asymptotic triangulations whose asymptotic arcs all have their extremities on a fixed boundary component.
		
		\begin{defi}[Arc based at a boundary]
			Let $\beta$ be a boundary component of $C_{p,q}$. An asymptotic arc is \emph{based} at $\beta$ if either it is an arc with both its extremities on $\beta$ or if it is strictly asymptotic with its unique extremity on $\beta$.
			
			We denote by $\b\A(\beta)$ the set of asymptotic arcs based at $\beta$ and by $\b\T(\beta)$ the set of maximal collections of pairwise distinct compatible asymptotic arcs in $\b\A(\beta)$. These are called the \emph{partial asymptotic triangulations based at the boundary component $\beta$}.
			
			We denote by $\b\E(\beta)$ the 
exchange graph formed by the partial asymptotic triangulations in $\b\T(\beta)$. 			
		\end{defi}
		
		\begin{prop}\label{prop:Etube}
			Let $\beta$ be a boundary component of $C_{p,q}$ containing $r$ marked points. Then the following hold:
			\begin{enumerate}
				\item any $T \in \b\T(\beta)$ consists of $r$ asymptotic arcs;
				\item for any $T \in \b\T(\beta)$ and any $\theta \in T$, there exists a unique $\theta^* \in \b\A(\beta)$ such that $\theta^* \neq \theta$ and such that $\mu_{\theta}T := T \setminus \ens{\theta} \sqcup \ens{\theta^*} \in \b\T(\beta)$. The partial asymptotic triangulation $\mu_{\theta}T$ is called the \emph{mutation} of $T$ in $\theta$;
				\item $\b\E(\beta)$ is a connected $r$-regular graph.
			\end{enumerate}
		\end{prop}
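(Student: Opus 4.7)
The plan is to translate the problem into combinatorics on a ``strip'' picture. Let $N_\beta$ be a tubular neighbourhood of $\beta$ small enough that every asymptotic arc based at $\beta$ can be isotoped into $N_\beta$; topologically $N_\beta$ is an annulus with $r$ marked points on $\beta$ and no marked points on its other boundary. Passing to the universal cover $\tilde N_\beta$ (an infinite strip), the boundary $\tilde\beta$ carries $\Z$-many marked points of period $r$ under the Dehn-twist action. Peripheral arcs on $\beta$ lift to $\Z$-orbits of bounded arcs between lifts of marked points; $\pi_m$ lifts to infinite rays spiralling in one direction and $\alpha_m$ to rays in the opposite direction. Compatibility on $C_{p,q}$ corresponds to non-crossing of lifts in $\tilde N_\beta$. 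An equivalent, more elementary viewpoint is to compactify $N_\beta$ by adjoining two formal points at infinity (one for each spiralling direction), producing a disk in which all arcs become finite and one may apply the classical polygonal combinatorics.

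For (1) and (2) the argument is as in the classical disk case. Given $T \in \b\T(\beta)$, the arcs of $T$ cut $N_\beta$ into faces, each of which is a triangle by maximality (possibly with a vertex at infinity). Counting edge incidences yields $3f = r + 2n$, where $n = |T|$ and $f$ is the number of faces, and the Euler-characteristic relation $0 = r - (r + n) + f$ then forces $n = r$. For (2), the two faces sharing a given $\theta \in T$ glue to a (possibly partly unbounded) quadrilateral whose other diagonal $\theta^*$ is uniquely determined; one verifies case-by-case, according to the types (peripheral/Pr\"ufer/adic) of the four sides, that $\theta^*$ lies in $\b\A(\beta)$ and that no other replacement is compatible.

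For (3), $r$-regularity is immediate from (2). For connectedness I would fix a reference triangulation, for instance the ``Pr\"ufer fan'' $T_0 := \{(m_1, m_i) : 2 \le i \le r\} \cup \{\pi_{m_1}\}$, and prove by induction on a complexity function such as $|T \setminus T_0|$ that every $T \in \b\T(\beta)$ can be brought to $T_0$ by a finite sequence of flips; the inductive step identifies a flip that strictly decreases the complexity. The main obstacle is the careful treatment of strictly asymptotic arcs in both the Euler count and the flip analysis: unbounded faces must be handled rigorously, and the mutation $\theta \mapsto \theta^*$ may change the type of the arc. The compactification by points at infinity is the cleanest way around this, but it requires checking that non-crossing there faithfully reflects compatibility on $C_{p,q}$—notably, that two Pr\"ufer arcs $\pi_m, \pi_{m'}$ sharing the same point at infinity remain compatible even though their lifts ``merge'' in the compactification.
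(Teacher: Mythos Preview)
Your Euler-characteristic argument for (1) has a genuine gap. The ``compactification of $N_\beta$ by two formal points at infinity'' does not produce a surface: in the strip picture the points $\pm\infty$ are fixed by the deck group $\Z$, so after quotienting they become cone points of infinite order, and the usual $V-E+F=\chi$ relation is unavailable. If instead you stay on the honest annulus, your formula $0 = r - (r+n) + f$ takes $V=r$ and $E=r+n$, which simply ignores the inner boundary circle; the asymptotic arcs never reach it, so there is no CW decomposition here, and the residual spiral region between the strictly asymptotic arcs and the inner boundary is not a triangle. The companion identity $3f = r + 2n$ is likewise unjustified until you say what the faces are near the inner boundary. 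You flag this as ``the main obstacle'' but do not resolve it, and the two-point compactification you propose as the cleanest fix is exactly where the argument breaks.

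The paper avoids this by reversing the order: it first proves (2) by explicit case analysis (after observing that every $T\in\b\T(\beta)$ contains at least one strictly asymptotic arc and that all such arcs are of the same type, Pr\"ufer or adic), then proves connectedness by mutating any $T$ to the all-Pr\"ufer configuration $\{\pi_{m_1},\ldots,\pi_{m_r}\}$, and finally deduces (1) from connectedness, since mutation preserves cardinality and the all-Pr\"ufer triangulation visibly has $r$ arcs. No Euler count is needed. Your sketches for (2) and (3) are close in spirit to this, though less explicit. If you want to rescue the Euler route, one clean fix is: first show all strictly asymptotic arcs in $T$ are of a single type (say Pr\"ufer), then apply an isotopy of the annulus that unwinds the positive spiralling so that Pr\"ufer curves become radial arcs landing on the inner boundary, and finally collapse the inner boundary to a single interior point; now you have an honest ideal triangulation of a once-punctured $r$-gon, where $1=(r+1)-(r+n)+f$ together with $3f=r+2n$ gives $n=r$.
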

		\begin{proof}
			Let $T \in \b\T(\beta)$. We first observe that $T$ necessarily contains a strictly asymptotic arc. 
Indeed, if not, then there is a marked point $m$ on $\beta$ such that $T$ contains no arc lying above $m$ which forms a triangle with $m$. 
Therefore, as it is illustrated in the figure below, both the Pr\"ufer and the adic curves based at $m$ are compatible with $T$, so that $T$ is not maximal. 
			\begin{center}
				\begin{tikzpicture}[scale = .5]
					\foreach \x in {0}
					{
						\draw (\x-3,0) -- (\x+4,0);
						
						\draw[] (\x-2,0) .. controls (\x-1,1) and (\x+2,1) .. (\x+3,0);
						\fill (\x-2,0) circle (.1);
						\fill (\x+3,0) circle (.1);
						
						
						\fill (\x-2,0) node [below] {$m$};
						\fill (\x+3,0) node [below] {$m$};
						\draw[] (\x-2,0) .. controls (\x-1,.5) and (\x+0,.5) .. (\x+1,0);
						\draw[] (\x+1,0) .. controls (\x+1.5,.5) and (\x+2.5,.5) .. (\x+3,0);
					}
						
						\draw[->] (7,1.5) -- (9,1.5);

					\foreach \x in {15}
					{
						\draw (\x-3,0) -- (\x+4,0);
						\Prufer{\x-2}{0}{6}{}
						
						\draw[] (\x-2,0) .. controls (\x-1,1) and (\x+2,1) .. (\x+3,0);
						
						\Prufer{\x+3}{0}{2}{}
						
						\fill (\x-2,0) node [below] {$m$};
						\fill (\x+3,0) node [below] {$m$};
						\draw[] (\x-2,0) .. controls (\x-1,.5) and (\x+0,.5) .. (\x+1,0);
						\draw[] (\x+1,0) .. controls (\x+1.5,.5) and (\x+2.5,.5) .. (\x+3,0);
					}
				\end{tikzpicture}
			\end{center}
			
			We now prove that for any asymptotic arc $\theta \in T$, there exists a unique $\theta^* \in \b\A(\beta)$ such that $\theta^* \neq \theta$ and such that $T \setminus \ens{\theta} \sqcup \ens{\theta^*} \in \b\T(\beta)$. We will distinguish several cases. First assume that $\theta$ is an arc which lies ``below'' another arc $\gamma$, as in the figure below, with $\theta$ the bold arc. 
			\begin{center}
				\begin{tikzpicture}[scale = .5]
					\foreach \x in {0}
					{
						\foreach \y in {0}
						{
							\draw (\x-3,\y) -- (\x+4,0);
							\draw[gray] (\x-2,0) .. controls (\x-1,2) and (\x+2,2) .. (\x+3,0);
							\draw[thick] (\x-2,0) .. controls (\x-1.5,1) and (\x+.5,1) .. (\x+1,0);
							\draw[gray] (\x-2,0) .. controls (\x-1.5,.5) and (\x-.5,.5) .. (\x,0);
							\draw[gray] (\x+1,0) .. controls (\x+1.5,1) and (\x+2.5,1) .. (\x+3,0);
							
							\fill (\x-2,0) circle (.1);
							\fill (\x-1,0) circle (.1);
							\fill (\x,0) circle (.1);
							\fill (\x+1,0) circle (.1);
							\fill (\x+2,0) circle (.1);
							\fill (\x+3,0) circle (.1);
						}
					}
				\end{tikzpicture}
			\end{center}
			Then cutting along $\gamma$ reduces the situation to that of a disc with marked points, in which case the result is known.

			We can now assume that $T$ contains no arc above $\theta$. In case $\theta$ is the unique strictly asymptotic arc in $T$ we let $m$ be the point on $\beta$ where $\theta$ is based. Then if $\theta =\pi_m$, it is clear that $\theta^* = \alpha_m$ and conversely, if $\theta = \alpha_m$, then $\theta^* = \pi_m$, see the figure below.
			\begin{center}
				\begin{tikzpicture}[scale = .5]
					\foreach \x in {0}
					{
						\draw (\x-3,0) -- (\x+4,0);
						\adic{\x-2}{0}{2}{}
						
						\draw[gray] (\x-2,0) .. controls (\x-1,1) and (\x+2,1) .. (\x+3,0);
						
						\adic{\x+3}{0}{6}{}
						
						\fill (\x-2,0) node [below] {$m$};
						\fill (\x+3,0) node [below] {$m$};
						\draw[gray] (\x-2,0) .. controls (\x-1,.5) and (\x+0,.5) .. (\x+1,0);
						\draw[gray] (\x+1,0) .. controls (\x+1.5,.5) and (\x+2.5,.5) .. (\x+3,0);
					}
						
						\draw[<->] (7,1.5) -- (9,1.5);

					\foreach \x in {15}
					{
						\draw (\x-3,0) -- (\x+4,0);
						\Prufer{\x-2}{0}{6}{}
						
						\draw[gray] (\x-2,0) .. controls (\x-1,1) and (\x+2,1) .. (\x+3,0);
						
						\Prufer{\x+3}{0}{2}{}
						
						\fill (\x-2,0) node [below] {$m$};
						\fill (\x+3,0) node [below] {$m$};
						\draw[gray] (\x-2,0) .. controls (\x-1,.5) and (\x+0,.5) .. (\x+1,0);
						\draw[gray] (\x+1,0) .. controls (\x+1.5,.5) and (\x+2.5,.5) .. (\x+3,0);
					}
				\end{tikzpicture}
			\end{center}
			
			Now assume that $T$ contains several strictly asymptotic arcs. Then necessarily either all these are Pr\"ufer or all these are adic. By symmetry, let us assume that these are Pr\"ufer. Now there are two cases, either $\theta$ is a strictly asymptotic arc, or it is not. Let first assume that $\theta$ is strictly asymptotic, so that $\theta = \pi_m$ for some marked point $m$ on $\beta$. Let $a$ and $b$ be the two (possibly equal) closest marked points on $\beta$ at which strictly asymptotic arcs of $T \setminus \ens{\pi_m}$ are based. Then, the unique asymptotic arc $\theta^*$ which can replace $\theta$ is the arc joining $a$ and $b$ and bordering a triangle with vertices $a,b$ and $m$, see the figure below.
			\begin{center}
				\begin{tikzpicture}[scale = .5]
					\foreach \x in {0}
					{
						\draw (\x-3,0) -- (\x+4,0);
						\Prufer{\x-2}{0}{7}{gray}
						\Prufer{\x+1}{0}{5}{thick}
						\Prufer{\x+3}{0}{3}{gray}
						\fill (\x-2,0) node [below] {$a$};
						\fill (\x+1,0) node [below] {$m$};
						\fill (\x+3,0) node [below] {$b$};
						\draw[gray] (\x-2,0) .. controls (\x-1,.5) and (\x+0,.5) .. (\x+1,0);
						\draw[gray] (\x+1,0) .. controls (\x+1.5,.5) and (\x+2.5,.5) .. (\x+3,0);
					}
						
						\draw[->] (7,1.5) -- (9,1.5);

					\foreach \x in {15}
					{
						\draw (\x-3,0) -- (\x+4,0);
						\Prufer{\x-2}{0}{6}{gray}
						
						\draw[thick] (\x-2,0) .. controls (\x-1,1) and (\x+2,1) .. (\x+3,0);
						\fill (\x+1,0) circle (.1);
						
						\Prufer{\x+3}{0}{2}{gray}
						
						\fill (\x-2,0) node [below] {$a$};
						\fill (\x+1,0) node [below] {$m$};
						\fill (\x+3,0) node [below] {$b$};
						\draw[gray] (\x-2,0) .. controls (\x-1,.5) and (\x+0,.5) .. (\x+1,0);
						\draw[gray] (\x+1,0) .. controls (\x+1.5,.5) and (\x+2.5,.5) .. (\x+3,0);
					}
				\end{tikzpicture}
			\end{center}
			Now assume that $\theta$ is not a strictly asymptotic arc. If it is below another arc, then we already saw that the statement holds. Therefore, we can assume that $\theta$ is an arc joining two (possibly equal) points $a$ and $b$ at which strictly asymptotic arcs are based, which are either both Pr\"ufer or both adic. Let us first assume that $T$ contains $\pi_a$ and $\pi_b$. Let $x$ be the vertex such that $\theta$ borders the triangle with vertices $a,b$ and $x$. Then the unique asymptotic arc which is compatible with $T \setminus \ens{\theta}$ and which differs from $\theta$ is $\pi_x$. Similarly, if $T$ contains $\alpha_a$ and $\alpha_b$, then $\theta^* = \alpha_x$. The figure below illustrates the situation.
			\begin{center}
				\begin{tikzpicture}[scale = .5]
					\foreach \x in {15}
					{
						\draw (\x-3,0) -- (\x+4,0);
						\Prufer{\x-2}{0}{7}{gray}
						\Prufer{\x+1}{0}{5}{thick}
						\Prufer{\x+3}{0}{3}{gray}
						\fill (\x-2,0) node [below] {$a$};
						\fill (\x+1,0) node [below] {$m$};
						\fill (\x+3,0) node [below] {$b$};
						\draw[gray] (\x-2,0) .. controls (\x-1,.5) and (\x+0,.5) .. (\x+1,0);
						\draw[gray] (\x+1,0) .. controls (\x+1.5,.5) and (\x+2.5,.5) .. (\x+3,0);
					}
						
						\draw[->] (7,1.5) -- (9,1.5);

					\foreach \x in {0}
					{
						\draw (\x-3,0) -- (\x+4,0);
						\Prufer{\x-2}{0}{6}{gray}
						
						\draw[thick] (\x-2,0) .. controls (\x-1,1) and (\x+2,1) .. (\x+3,0);
						\fill (\x+1,0) circle (.1);
						
						\Prufer{\x+3}{0}{2}{gray}
						
						\fill (\x-2,0) node [below] {$a$};
						\fill (\x+1,0) node [below] {$m$};
						\fill (\x+3,0) node [below] {$b$};
						\draw[gray] (\x-2,0) .. controls (\x-1,.5) and (\x+0,.5) .. (\x+1,0);
						\draw[gray] (\x+1,0) .. controls (\x+1.5,.5) and (\x+2.5,.5) .. (\x+3,0);
					}
				\end{tikzpicture}
			\end{center}
			
			Therefore, we saw that in either case there is a unique asymptotic arc $\theta^* \neq \theta$ such that $T \setminus \ens{\theta} \sqcup \ens{\theta^*} \in \b\T(\beta)$, proving the second point.
			
			We now prove that $\b\E(\beta)$ is connected by proving that any $T \in \b\T(\beta)$ is connected to the partial asymptotic triangulation consisting only of Pr\"ufer curves, one based at each marked point. For this we fix $T \in \b\T(\beta)$ and according to the above discussion, we can successively mutate $T$ at its strictly asymptotic arcs until we get a partial asymptotic triangulation $T'$ containing exactly one strictly asymptotic arc, this arc being based at a certain marked point $x$. Now either $\pi_x \in T'$ and we set $T'' = T'$, or $\pi_x \in T'$ and we set $T'' = T' \setminus \ens{\alpha_x} \sqcup \ens{\pi_x}$ so that in both cases $T''$ contains exactly one strictly asymptotic arc, which is Pr\"ufer. Now according to the previous discussion, we can successively mutate at the peripheral arcs of $T''$ in such a way that at each step we create precisely one new Pr\"ufer curve, this being done until we end up with the partial asymptotic triangulation consisting only of Pr\"ufer curves, one based at each marked point, as claimed. Therefore $\b\E(\beta)$ is connected. The picture below illustrates the successive steps.
			\begin{center}
				\begin{tikzpicture}[scale = .45]
					\foreach \x in {0}
					{
						\foreach \y in {0}
						{
							\draw (\x-3,\y+0) -- (\x+4,\y+0);
							\adic{\x-2}{\y+0}{3}{}
							\adic{\x+1}{\y+0}{5}{}
							\adic{\x+3}{\y+0}{7}{}
							\fill (\x-1,\y+0) circle (.1);
							\fill (\x+0,\y+0) circle (.1);
							\fill (\x+2,\y+0) circle (.1);
							\fill (\x-2,\y+0) node [below] {$x$};
							\fill (\x+3,\y+0) node [below] {$x$};
							\draw[] (\x-2,\y+0) .. controls (\x-1,\y+.75) and (\x+0,\y+.75) .. (\x+1,\y+0);
							\draw[] (\x-2,\y+0) .. controls (\x-1,\y+.5) and (\x-1,\y+.5) .. (\x,\y+0);
							\draw[] (\x+1,\y+0) .. controls (\x+1.5,\y+.5) and (\x+2.5,\y+.5) .. (\x+3,\y+0);
						}
					}

					\draw[->] (7,1.5) -- (9,1.5);
					
					\foreach \x in {15}
					{
						\foreach \y in {0}
						{
							\draw (\x-3,\y+0) -- (\x+4,\y+0);
							\adic{\x-2}{\y+0}{3}{}
							\adic{\x+3}{\y+0}{7}{}
							\fill (\x-2,\y+0) node [below] {$x$};
							\fill (\x+3,\y+0) node [below] {$x$};
							\fill (\x-1,\y+0) circle (.1);
							\fill (\x,\y+0) circle (.1);
							\fill (\x+1,\y+0) circle (.1);
							\fill (\x+2,\y+0) circle (.1);
							\draw[] (\x-2,\y+0) .. controls (\x-1,\y+1) and (\x+2,\y+1) .. (\x+3,\y+0);
							\draw[] (\x-2,\y+0) .. controls (\x-1,\y+.75) and (\x+0,\y+.75) .. (\x+1,\y+0);
							\draw[] (\x-2,\y+0) .. controls (\x-1,\y+.5) and (\x-1,\y+.5) .. (\x,\y+0);
							\draw[] (\x+1,\y+0) .. controls (\x+1.5,\y+.5) and (\x+2.5,\y+.5) .. (\x+3,\y+0);
						}
					}
					
					\draw[->] (16,-2) -- (16,-4);
					
					\foreach \x in {15}
					{
						\foreach \y in {-8}
						{
							\draw (\x-3,\y+0) -- (\x+4,\y+0);
							\Prufer{\x-2}{\y+0}{7}{}
							\Prufer{\x+3}{\y+0}{3}{}
							\fill (\x-2,\y+0) node [below] {$x$};
							\fill (\x+3,\y+0) node [below] {$x$};
							\fill (\x-1,\y+0) circle (.1);
							\fill (\x,\y+0) circle (.1);
							\fill (\x+1,\y+0) circle (.1);
							\fill (\x+2,\y+0) circle (.1);
							\draw[] (\x-2,\y+0) .. controls (\x-1,\y+1) and (\x+2,\y+1) .. (\x+3,\y+0);
							\draw[] (\x-2,\y+0) .. controls (\x-1,\y+.75) and (\x+0,\y+.75) .. (\x+1,\y+0);
							\draw[] (\x-2,\y+0) .. controls (\x-1,\y+.5) and (\x-1,\y+.5) .. (\x,\y+0);
							\draw[] (\x+1,\y+0) .. controls (\x+1.5,\y+.5) and (\x+2.5,\y+.5) .. (\x+3,\y+0);
						}
					}

					\draw[dashed,<-] (7,-6.5) -- (9,-6.5);
					
					\foreach \x in {0}
					{
						\foreach \y in {-8}
						{
							\draw (\x-3,\y+0) -- (\x+4,\y+0);
							\Prufer{\x-2}{\y+0}{7}{}
							\Prufer{\x-1}{\y+0}{6}{}
							\Prufer{\x}{\y+0}{5}{}
							\Prufer{\x+1}{\y+0}{4}{}
							\Prufer{\x+2}{\y+0}{4}{}
							\Prufer{\x+3}{\y+0}{3}{}
							\fill (\x-2,\y+0) node [below] {$x$};
							\fill (\x+3,\y+0) node [below] {$x$};
						}
					}
				\end{tikzpicture}
			\end{center}
			
			Now it only remains to prove that $T \in \b\E(\beta)$ contains $r$ asymptotic arcs where $r$ is the number of marked points on $\beta$. By connectedness of $\b\E(\beta)$, the number of asymptotic arcs in a partial asymptotic triangulation $T \in \b\T(\beta)$ does not depend on the choice of $T$. Therefore it is enough to observe that the partial asymptotic triangulation consisting of the Pr\"ufer curves based at each point contains as many asymptotic arcs as the number of marked points, proving the first point. Together with the second point, this proves that $\b\E(\beta)$ is $r$-regular, and thus the proposition.
		\end{proof}

	\subsection{From partial to complete asymptotic triangulations}

		\begin{lem}\label{lem:twoarcs}
			Let $T \in \b\T(C_{p,q})$ be an asymptotic triangulation. Then the following hold:
			\begin{enumerate}
				\item If $T \in \T(C_{p,q})$, then $T$ contains at least two bridging arcs;
				\item If $T$ is strictly asymptotic, then it contains at least two strictly asymptotic arcs and there are partial asymptotic triangulations $T_{\d}$ and $T_{\d'}$ based respectively at $\d$ and $\d'$ such that $T = T_{\d} \sqcup T_{\d'}$.
			\end{enumerate}
		\end{lem}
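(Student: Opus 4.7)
For part (1), I would first show that $T$ contains at least one bridging arc by a topological argument: every peripheral arc can be isotoped into an arbitrarily small collar of its base boundary, so a triangulation consisting of only peripheral arcs would leave an open neighborhood of the core $z$ uncovered, contradicting that $T$ triangulates $C_{p,q}$. For the stronger conclusion of at least two bridging arcs, I argue by contradiction. Assume $T$ contains a unique bridging arc $\gamma$ and cut $C_{p,q}$ along $\gamma$. The result is a polygon $D$ with $p+q+2$ marked boundary vertices, whose boundary consists of two copies of $\gamma$ separating a $\d$-arc carrying $p+1$ vertices from a $\d'$-arc carrying $q+1$ vertices. The restriction of $T\setminus \ens{\gamma}$ to $D$ is a triangulation by $p+q-1$ diagonals. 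Consider the triangle of this triangulation adjacent to one of the two $\gamma$-copies: its third vertex must lie on the $\d$-arc, on the $\d'$-arc, or at an endpoint of the opposite $\gamma$-copy, and in each case one of the two non-$\gamma$ sides of the triangle is a diagonal joining a $\d$-vertex to a $\d'$-vertex. Such a diagonal corresponds in $C_{p,q}$ to a bridging arc distinct from $\gamma$, contradicting the assumption.

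For part (2), I start from the observation already recorded in the text that no bridging arc is compatible with any strictly asymptotic arc. Since $T$ is strictly asymptotic it contains at least one such arc and therefore no bridging arcs, so every arc of $T$ is based at $\d$ or at $\d'$. I define $T_\d$ and $T_{\d'}$ as the corresponding subcollections, so that $T = T_\d \sqcup T_{\d'}$.

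To see each factor is maximal in its respective $\b\T$, I prove the case of $T_\d$; the case of $T_{\d'}$ is symmetric. Suppose for contradiction that $\theta \in \b\A(\d) \setminus T_\d$ is compatible with $T_\d$; I will check that $\theta$ is then compatible with $T_{\d'}$, so that $T \sqcup \ens{\theta}$ contradicts maximality of $T$ in $\b\T(C_{p,q})$. The key compatibility fact is that an asymptotic arc based at $\d$ and one based at $\d'$ can fail to be compatible only when both are strictly asymptotic with opposite spiral orientations (one Pr\"ufer, one adic): peripheral arcs can be isotoped into their own boundary collar and two like-oriented strictly asymptotic arcs emanating from opposite boundaries remain asymptotically parallel near $z$. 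Consequently the strictly asymptotic arcs occurring in $T$ all share a common spiral orientation, and compatibility of $\theta$ with $T_\d$ forces $\theta$ to share this orientation as well, yielding compatibility of $\theta$ with $T_{\d'}$.

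Finally, neither $T_\d$ nor $T_{\d'}$ can be empty: otherwise any peripheral arc on the absent boundary would be compatible with $T$ and contradict maximality. Proposition \ref{prop:Etube} then applies to each of $T_\d$ and $T_{\d'}$ to provide a strictly asymptotic arc, so $T$ contains at least two strictly asymptotic arcs. The main obstacle I anticipate is the compatibility analysis in (2), especially formalising how Pr\"ufer and adic curves from opposite boundaries wind around $z$ and extracting from this the clean statement that cross-boundary incompatibility occurs only in the opposite-orientation case; the rest of the argument reduces to elementary polygon bookkeeping and a direct appeal to Proposition \ref{prop:Etube}.
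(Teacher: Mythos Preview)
Your argument for (1) is correct but differs from the paper's. The paper observes that the peripheral arcs of $T$ leave at least one marked point on each boundary uncovered, so that completing them to a triangulation amounts to triangulating a smaller annulus using only bridging arcs, and this always requires at least two. You instead cut along a hypothetical unique bridging arc and locate a forced bridging diagonal in the resulting polygon. Both approaches are sound; yours is perhaps more self-contained, while the paper's reduction is quicker once one accepts the smaller-annulus picture.

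For (2) the routes also diverge. The paper directly produces a second strictly asymptotic arc on $\d'$: if all arcs of $T$ at $\d'$ were peripheral, some marked point $m'$ there would be uncovered, and a strictly asymptotic arc at $m'$ (of the appropriate orientation) would be compatible with all of $T$, contradicting maximality. The paper then sets $T_\d$, $T_{\d'}$ to be the arcs based at each boundary and leaves their maximality implicit. You reverse the order, first proving maximality of $T_\d$ and $T_{\d'}$ and then invoking the opening observation of Proposition~\ref{prop:Etube}. Your version is more explicit on the maximality claim, which the paper glosses over.

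There are, however, two small gaps in your argument for (2). First, the step ``compatibility of $\theta$ with $T_\d$ forces $\theta$ to share this orientation'' only works if $T_\d$ itself contains a strictly asymptotic arc; you may assume this for $T_\d$ without loss of generality, but then the case of $T_{\d'}$ is not symmetric, since $T_{\d'}$ might a priori be entirely peripheral. The fix is short: if $T_{\d'}$ is all peripheral and $\theta\in\b\A(\d')$ is strictly asymptotic with the wrong orientation, replace $\theta$ by the opposite-orientation arc at the same basepoint, which is equally compatible with the peripheral arcs of $T_{\d'}$ and now compatible with $T_\d$, contradicting maximality of $T$. Second, your non-emptiness argument via ``any peripheral arc on the absent boundary'' fails when that boundary has a single marked point, since there are then no peripheral arcs. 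This step is in fact redundant: once maximality is established, Proposition~\ref{prop:Etube}(1) already gives $|T_\d|=p\geq 1$ and $|T_{\d'}|=q\geq 1$.
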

		\begin{proof}
			Let $T \in \T(C_{p,q})$ and let $\b T$ be the set of all bridging arcs in $T$. Then there exists at least one point on each boundary component which is not covered by an arc of $\b T$ forming a triangle with this point as a vertex (as the point $m$ in the first picture of the proof of Proposition~\ref{prop:Etube}). In this situation, completing $\b T$ to a triangulation amounts to triangulating another annulus with less marked point on each boundary component using only bridging arcs, this always requires at least two bridging arcs.

			Now assume that $T$ is a strictly asymptotic triangulation so that it contains a strictly asymptotic arc $\gamma_m$ based at a point $m$ on one of the boundary component, say $\d$. Since $T$ consists only of peripheral arcs and of strictly asymptotic arcs, the asymptotic arcs based at marked points on the other boundary component $\d'$ are either peripheral or strictly asymptotic. If such an asymptotic arc is strictly asymptotic, the first part of claim (2) is proved. Therefore, assume that all these arcs are peripheral. Then, as in the previous discussion, there exists a point $m'$ on $\d'$ above which there lies no triangle cut out by peripheral arcs of $T$ (as the point $m$ in the first picture of the proof of Proposition~\ref{prop:Etube}). As the strictly asymptotic arcs based at $m'$ are compatible with all the peripheral arcs and with all the strictly asymptotic arcs based at marked points on $\d$, the asymptotic triangulation $T$ necessarily contains a strictly asymptotic arc based at $m'$, which proves the first part of claim (2). Let $T_{\d}$ be the set of asymptotic arcs of $T$ based at $\d$ and by $T_{\d'}$ be the set of asymptotic arcs of $T$ based at $\d'$. Since $T$ contains no bridging arcs, it follows that $T = T_{\d} \sqcup T_{\d'}$, proving the second point.
		\end{proof}

		\begin{corol}\label{corol:nbarcs}
			The number of asymptotic arcs in an asymptotic triangulation of $C_{p,q}$ is $p+q$.
		\end{corol}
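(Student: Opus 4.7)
The plan is to split the count along the dichotomy provided by Lemma \ref{lem:twoarcs}. Any $T \in \b\T(C_{p,q})$ is either a standard triangulation in $\T(C_{p,q})$ or it is strictly asymptotic, and in the latter case Lemma \ref{lem:twoarcs}(2) supplies a disjoint decomposition $T = T_\d \sqcup T_{\d'}$ with $T_\d \in \b\T(\d)$ and $T_{\d'} \in \b\T(\d')$.

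For the strictly asymptotic case I would simply apply Proposition \ref{prop:Etube} to each boundary component: since $\d$ and $\d'$ carry $p$ and $q$ marked points respectively, $|T_\d| = p$ and $|T_{\d'}| = q$, whence $|T| = p + q$. For a standard $T \in \T(C_{p,q})$ I would invoke the classical count from \cite{FST:surfaces} for an unpunctured surface of genus $0$ with two boundary components carrying a total of $p+q$ marked points, which yields exactly $p+q$ arcs per triangulation; alternatively, one can verify this on a single explicit triangulation (say a fan based at a fixed marked point of $\d$, completed by bridging arcs to the vertices of $\d'$) and transport the count to every other standard triangulation via the fact that flips preserve cardinality.

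The corollary is essentially bookkeeping and no step presents a real obstacle. The only point worth flagging is the agreement of the two counts: that strictly asymptotic triangulations contain the same number of asymptotic arcs as the standard ones is precisely what makes the asymptotic exchange graph $\b\E(C_{p,q})$ behave uniformly in the two regimes and sets the stage for comparing flips across the boundary between $\T(C_{p,q})$ and its strictly asymptotic extension.
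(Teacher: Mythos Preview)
Your proposal is correct and follows exactly the paper's own argument: split into the standard case (where the count $p+q$ is classical) and the strictly asymptotic case, then use the decomposition $T = T_{\d} \sqcup T_{\d'}$ from Lemma~\ref{lem:twoarcs} together with Proposition~\ref{prop:Etube} to get $|T_\d|=p$ and $|T_{\d'}|=q$. The paper's proof is terser but identical in content.
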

		\begin{proof}
			Let $T$ be an asymptotic triangulation. Either it is a triangulation in the usual sense in which case it is known that is has $p+q$ arcs, or it is strictly asymptotic in which case we can decompose it as $T = T_{\d} \sqcup T_{\d'}$ and the result then follows from Lemma \ref{lem:twoarcs}, observing that $|T_{\d}|=p$ and $|T_{\d'}|=q$.
		\end{proof}
		
		\begin{prop}\label{prop:flips}
			Let $T \in \b\T(C_{p,q})$ and let $\theta \in T$. Then there exists a unique $\theta^* \in \b\A(C_{p,q})$ such that $\theta^* \neq \theta$ and such that $T \setminus \ens{\theta} \sqcup \ens{\theta^*} \in \b\T(C_{p,q})$. This is called the \emph{mutation} of $T$ in $\theta$, written as $\mu_{\theta}T$. 
		\end{prop}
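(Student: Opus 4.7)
The plan is to split into the two cases distinguished by Lemma~\ref{lem:twoarcs}. First, if $T \in \T(C_{p,q})$ is a usual triangulation, then $T \setminus \ens{\theta}$ still contains at least one bridging arc by Lemma~\ref{lem:twoarcs}(1). Since a bridging arc is incompatible with every strictly asymptotic arc (as noted after the definition of $\b\T(C_{p,q})$), any valid replacement $\theta^*$ must lie in $\A(C_{p,q})$. Existence and uniqueness then reduce to the standard flip theorem for triangulations of unpunctured marked surfaces, cf.~\cite{FST:surfaces}.

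Now suppose $T$ is strictly asymptotic. Lemma~\ref{lem:twoarcs}(2) yields a decomposition $T = T_\d \sqcup T_{\d'}$ with $T_\d \in \b\T(\d)$ and $T_{\d'} \in \b\T(\d')$, each containing at least one strictly asymptotic arc. Assume without loss of generality $\theta \in T_\d$, and let $\theta^* \in \b\A(\d)$ be the unique arc produced by Proposition~\ref{prop:Etube}(2) so that $\mu_\theta T_\d := (T_\d \setminus \ens{\theta}) \sqcup \ens{\theta^*}$ lies in $\b\T(\d)$. I claim that $\mu_\theta T := \mu_\theta T_\d \sqcup T_{\d'}$ is the desired asymptotic triangulation. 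Arcs internal to each summand are pairwise compatible by construction, and compatibility across the two summands is obtained by realising peripheral arcs inside disjoint collar neighbourhoods of $\d$ and $\d'$ and choosing the spiralling tails of strictly asymptotic arcs to approach the core curve $z$ from opposite sides. Since $|\mu_\theta T| = p+q$ by Proposition~\ref{prop:Etube}(1), Corollary~\ref{corol:nbarcs} forces maximality.

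For uniqueness, any candidate $\theta^\sharp$ must be compatible with $T \setminus \ens{\theta}$, which still contains a strictly asymptotic arc based at $\d'$, so $\theta^\sharp$ cannot be bridging. If $\theta^\sharp \in \b\A(\d')$, then $T_{\d'} \sqcup \ens{\theta^\sharp}$ would be a collection of $q+1$ pairwise compatible asymptotic arcs based at $\d'$, contradicting the maximal cardinality $q$ given by Proposition~\ref{prop:Etube}(1). Hence $\theta^\sharp \in \b\A(\d)$, and uniqueness in Proposition~\ref{prop:Etube}(2) forces $\theta^\sharp = \theta^*$. The main obstacle will be the cross-boundary compatibility in the second case: although geometrically transparent from the spiralling picture, rigorously verifying that $\theta^*$ is disjoint from every arc of $T_{\d'}$ requires an explicit choice of isotopy representatives inside disjoint annular neighbourhoods of $\d$ and $\d'$, together with a careful description of how the spiralling tails accumulate onto $z$ from the two sides.
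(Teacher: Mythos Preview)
Your argument is correct and follows essentially the same route as the paper: split via Lemma~\ref{lem:twoarcs} into the ordinary and strictly asymptotic cases, use the standard flip together with the surviving bridging arc in the first case, and reduce the second case to Proposition~\ref{prop:Etube} on the component $T_\d$ containing $\theta$. Your uniqueness argument (ruling out $\theta^\sharp \in \b\A(\d')$ by the cardinality bound of Proposition~\ref{prop:Etube}(1), and bridging arcs via the strictly asymptotic arc in $T_{\d'}$) is the same in substance as the paper's appeal to maximality of $T_{\d'}$; you are simply more explicit about cross-boundary compatibility and about why $\mu_\theta T_\d \sqcup T_{\d'}$ is maximal, points the paper leaves implicit.
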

		\begin{proof}
			Let $\T \in \b\T(C_{p,q})$. If $T \in \T(C_{p,q})$, then it is well-known that for any $\theta \in T$, there exists a unique $\theta^* \in \A(C_{p,q})$ such that $\theta^* \neq \theta$ and such that $T \setminus \ens{\theta} \sqcup \ens{\theta^*} \in \T(C_{p,q})$. Now it follows from Lemma \ref{lem:twoarcs} that every triangulation of $C_{p,q}$ contains at least two bridging arcs so that $T \setminus \ens{\theta}$ is not compatible with any strictly asymptotic arc. Therefore, the above arc $\theta^*$ is the unique asymptotic arc such that $T \setminus \ens{\theta} \sqcup \ens{\theta^*} \in \b\T(C_{p,q})$.

			Assume now that $T$ is strictly asymptotic. Then it follows from Lemma \ref{lem:twoarcs} that $T = T_{\d} \sqcup T_{\d'}$ where $\d$ and $\d'$ are the two boundary components of $C_{p,q}$. By symmetry, assume that $\theta \in T_{\d}$. By maximality of $T_{\d'}$, any element in $\b\A(\d')$ which is compatible with $T_{\d'}$ is already contained in $T_{\d'}$. Moreover, since $T_{\d'}$ contains a strictly asymptotic arc, any triangulation containing this strictly asymptotic arc cannot contain a bridging arc, and therefore if $\theta^*$ exists, it is necessarily based at $\d$. Then it follows from Proposition \ref{prop:Etube} that there exists a unique $\theta^* \in \b\A(\d)$ such that $\mu_{\theta}T_{\d} = T_{\d} \setminus \ens{\theta} \sqcup \ens{\theta^*} \in \b\T(\d)$ and therefore, $\mu_{\theta}T_{\d} \sqcup T_{\d'} \in \b\T(C_{p,q})$.
		\end{proof}


		\begin{rmq}\label{rmq:q=0}
			In case $p\ge 1$ we call $C_{p,0}$ a \emph{tube with $p$ marked points}. 
			As usual, we denote by $\d$ the boundary component of $C_{p,0}$ containing $p$ marked points.
			An {\em asymptotic arc in $C_{p,0}$} is an element in $\b\A(C_{p,0}) = \b\A(\d)$, an {\em asymptotic triangulation of $C_{p,0}$} is an element in $\b\T(C_{p,0}) = \b\T(\d)$, and the \emph{asymptotic exchange graph of $C_{p,0}$} is $\b\E(C_{p,0}) = \b\E(\d)$.

			In particular, the statements of Proposition~\ref{prop:Etube} hold for the tube, that is~:
			\begin{enumerate}
				\item Any asymptotic triangulation $T \in \b\T(C_{p,0})$ consists of $p$ asymptotic arcs. 
				\item For any $T \in \b\T(C_{p,0})$ and any $\theta \in T$, there exists a unique $\theta^* \neq \theta$ in $\b\A(C_{p,0})$ such that 
					$\mu_{\theta}(T)=T \setminus \{\theta\} \sqcup \{\theta^*\}$ is an asymptotic triangulation of $C_{p,0}$. 
				\item $\b\E(C_{p,0})$ is a $p$-regular graph. \hfill \qed
			\end{enumerate}
		\end{rmq}

	\subsection{The boundary of the exchange graph}
		We next move on to the general situation of the annulus $C_{p,q}$ and we now allow the case $p \geq 1, q=0$ as well. As we saw in Propositions \ref{prop:Etube} and \ref{prop:flips}, the mutation of a strictly asymptotic triangulation of $C_{p,q}$ is again strictly asymptotic. Therefore, the full subgraph of the asymptotic exchange graph consisting of the strictly asymptotic triangulations is a union of connected components of the asymptotic exchange graph. This graph is called the \emph{boundary of the asymptotic exchange graph} and is denoted by $\d\b\E(C_{p,q})$.
		
		In case $q = 0$, any asymptotic triangulation is strictly asymptotic so that we have $\d\b\E(C_{p,0}) \simeq \b\E(C_{p,0})$ for any $p \geq 1$.

		For $p \geq 1$ we denote by $\mathcal T_p$ the mesh category over $\k$ of the stable translation quiver $\mathbb{Z}A_{\infty}/(p)$. Slightly abusing the terminology, we also call $\mathcal T_p$ a tube of rank $p$. We extend this category by adding filtered direct limits and filtered inverse limits of modules, see \cite{BBM:torsiontubes}. 
		We denote the resulting category by $\overline{\mathcal T_p}$, called the \emph{extended tube of rank $p$}. Its indecomposable objects are the indecomposable objects of $\mathcal T_p$ together with the Pr\"ufer modules $M_{i,\infty}$, for $i=0,\dots,p-1$ and the adic modules $\widehat{M}_i$, for $i=0,\dots,p-1$. 

		We write $\E(\overline{\mathcal T}_p)$ for the exchange graph of maximal rigid objects of the extended tube $\overline{\mathcal T}_p$. 
		This exchange graph has been first studied in  \cite[Appendix B]{BuanKrause:cotilting}. It is called the {\em circular associahedron}. 

		\begin{lem} \label{lem:exchange}
			Let $\beta$ be a boundary component with $p>0$ marked points. Then 
			$$\b\E(\beta) \simeq \E(\overline{\mathcal T}_p).$$ 
		\end{lem}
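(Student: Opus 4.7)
My plan is to construct an explicit bijection between the vertices of the two graphs and show that it intertwines the mutation operations, so that edges correspond. The main ingredients will be the classifications of indecomposable objects on each side together with the known (and very rigid) combinatorial structure of the circular associahedron.

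First, I would define a bijection $\Phi \colon \overline{\A}(\beta) \to \mathrm{Ind}(\overline{\mathcal T}_p)$ at the level of arcs and indecomposables. Label the $p$ marked points on $\beta$ by $\Z/p\Z$. To the Pr\"ufer curve $\pi_{m_i}$ I attach the Pr\"ufer module $M_{i,\infty}$, to the adic curve $\alpha_{m_i}$ I attach the adic module $\widehat M_i$, and to a peripheral arc $(m_i,m_j)$ I attach the unique indecomposable $M_{i,\ell}$ in $\mathcal T_p$ whose quasi-socle and quasi-length encode the starting point $m_i$ and the number of marked points it encloses (the arc going from $m_i$ to $m_j$ following the boundary orientation). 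Since on the boundary component $\beta$ there are exactly $p$ Pr\"ufer, $p$ adic, and one peripheral arc for each pair of non-neighbouring marked points, a direct count against the indecomposables of $\overline{\mathcal T}_p$ shows $\Phi$ is a bijection.

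Next I would verify that $\Phi$ sends compatibility of asymptotic arcs to the vanishing of $\Ext^1$ in $\overline{\mathcal T}_p$. For two peripheral arcs this reduces to the standard dictionary for the tube $\mathcal T_p$ (crossing of arcs corresponds to a non-trivial extension between the associated tube modules, see \cite{BBM:torsiontubes} and \cite{FST:surfaces}). For strictly asymptotic arcs, I would check the four mixed cases: a Pr\"ufer $\pi_{m_i}$ is compatible with a peripheral arc $(m_j,m_k)$ iff the arc spirals away from the triangle above $(m_j,m_k)$, which matches the condition $\Ext^1(M_{i,\infty},M_{j,\ell})=0$; and similarly for adic curves. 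Two Pr\"ufer curves $\pi_{m_i},\pi_{m_j}$ are always compatible and correspondingly $\Ext^1(M_{i,\infty},M_{j,\infty})=0$; same for adics; whereas a Pr\"ufer and an adic are compatible iff they are based at the same point, matching the Ext-vanishing pattern in $\overline{\mathcal T}_p$ listed in \cite[Appendix B]{BuanKrause:cotilting}. Once compatibility matches Ext-vanishing, $\Phi$ sends maximal collections of pairwise compatible asymptotic arcs to maximal rigid objects; by Proposition~\ref{prop:Etube}(1) both sides have exactly $p$ summands, so the induced map on vertices of $\overline{\E}(\beta)$ and $\E(\overline{\mathcal T}_p)$ is bijective.

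Finally I would check that edges correspond, i.e.\ that $\Phi$ intertwines mutation. The uniqueness of the flip $\theta \mapsto \theta^*$ from Proposition~\ref{prop:Etube}(2) together with the uniqueness of mutation of maximal rigid objects in $\overline{\mathcal T}_p$ (established in \cite[Appendix B]{BuanKrause:cotilting}) reduces this to showing that $\Phi$ sends a mutation pair of partial asymptotic triangulations to a mutation pair of maximal rigid objects; and this follows formally from the match between compatibility and Ext-vanishing once one knows there is a unique replacement on each side.

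The main obstacle will be the bookkeeping in the second step: setting up the indexing of the tube indecomposables so that the case analysis of Proposition~\ref{prop:Etube} (peripheral versus Pr\"ufer versus adic, and the triangle-bordering condition appearing in the flip) translates cleanly into the Ext-vanishing conditions in $\overline{\mathcal T}_p$. Once this dictionary is fixed, the proof is essentially a verification that the two mutation rules coincide case by case.
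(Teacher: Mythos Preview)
Your approach is essentially what the paper's proof does, except that the paper compresses it into a one-line citation of \cite{BBM:torsiontubes}: the bijection between asymptotic arcs in $C_{p,0}$ and indecomposables of $\overline{\mathcal T}_p$, together with the fact that it carries compatibility to $\Ext^1$-vanishing and hence intertwines mutations, is precisely the content being invoked there. So your outline unpacks the cited result rather than taking a different route.

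There is, however, a concrete slip in your case analysis. You write that ``a Pr\"ufer and an adic are compatible iff they are based at the same point''. This is false: a curve spiraling positively around the annulus and a curve spiraling negatively around it must cross (infinitely often), regardless of their basepoints; hence \emph{no} Pr\"ufer curve is compatible with \emph{any} adic curve. This is exactly why, in the proof of Proposition~\ref{prop:Etube}, the strictly asymptotic arcs in a partial asymptotic triangulation are declared to be either all Pr\"ufer or all adic, and it matches the $\Ext$ pattern in $\overline{\mathcal T}_p$, where a maximal rigid object contains Pr\"ufer modules or adic modules but never both. Once this case is corrected, the remainder of your verification goes through and the argument is complete.
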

		\begin{proof}
			The assertion follows, since the set of asymptotic triangulations of the annulus $C_{p,0}$ is in bijection with the set of isomorphism classes of maximal rigid objects in the extended tube $\overline{\mathcal T_p}$ and that this bijection commutes with mutations, see \cite{BBM:torsiontubes}.
		\end{proof}
We thus obtain the following geometric analogue of \cite[Corollary 4.9]{BBM:torsiontubes}: 
		\begin{corol}
			There are $2{2p-1\choose p-1}$ asymptotic triangulations of the annulus $C_{p,0}$. 
		\end{corol}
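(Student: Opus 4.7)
The plan is to reduce the statement to a known algebraic enumeration via Lemma~\ref{lem:exchange}. When $q=0$, the annulus $C_{p,0}$ has a single boundary component $\d$ carrying all $p$ marked points, so by Remark~\ref{rmq:q=0} the set of asymptotic triangulations of $C_{p,0}$ coincides with $\b\T(\d)$ and $\b\E(C_{p,0})=\b\E(\d)$. Lemma~\ref{lem:exchange} then provides an isomorphism of graphs $\b\E(\d)\simeq \E(\overline{\mathcal T}_p)$, in particular a bijection on vertex sets. It therefore suffices to count isomorphism classes of maximal rigid objects of the extended tube $\overline{\mathcal T}_p$.

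This count is precisely the content of \cite[Corollary~4.9]{BBM:torsiontubes}, which gives exactly $2\binom{2p-1}{p-1}$ maximal rigid objects of $\overline{\mathcal T}_p$ up to isomorphism. Combined with the bijection above, this immediately yields $|\b\T(C_{p,0})|=2\binom{2p-1}{p-1}$. There is no genuine obstacle in this argument: the enumerative work has already been carried out on the algebraic side, and the corollary is a direct translation of that fact into the geometric language of asymptotic triangulations.

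For completeness I note that a purely combinatorial derivation is also possible. By the proof of Proposition~\ref{prop:Etube}, every $T\in\b\T(C_{p,0})$ contains at least one strictly asymptotic arc, and if it contains more than one, they are either all Pr\"ufer or all adic. Stratifying by the type (Pr\"ufer vs.\ adic), by the number $k\in\{1,\dots,p\}$ of strictly asymptotic arcs, and by the positions of their basepoints on $\d$, each stratum reduces to counting triangulations of a disc with a prescribed number of marked points; assembling these via standard Catalan-type identities reproduces $2\binom{2p-1}{p-1}$. However, this simply reenacts the combinatorics underlying \cite[Corollary~4.9]{BBM:torsiontubes}, so the cleanest presentation is the one-line reduction via Lemma~\ref{lem:exchange}.
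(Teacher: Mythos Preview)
Your proposal is correct and follows essentially the same approach as the paper: translate via Lemma~\ref{lem:exchange} to maximal rigid objects in $\overline{\mathcal T}_p$ and then invoke a known enumeration. The only cosmetic difference is that you cite \cite[Corollary~4.9]{BBM:torsiontubes} directly for the full count $2\binom{2p-1}{p-1}$, whereas the paper cites \cite[Theorem~5.2 and Appendix~B]{BuanKrause:cotilting} for the $\binom{2p-1}{p-1}$ Pr\"ufer-containing objects and leaves the doubling by the Pr\"ufer/adic symmetry implicit.
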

		\begin{proof}
			By \cite[Theorem 5.2]{BuanKrause:cotilting}, there is an isomorphism between the poset of basic maximal rigid objects of
			$\overline{\mathcal T_p}$ containing a Pr\"ufer object and a poset $\tilde{\mathcal C}(p)$ which 
			is the union of $p$ copies of the Tamari lattice $\mathcal C(p)$. Its graph is called the \emph{circular associahedron} in
			\cite{BuanKrause:cotilting}. The poset $\tilde{\mathcal C}(p)$ has ${2p-1\choose p-1}$ vertices, see \cite[Appendix B]{BuanKrause:cotilting}.
		\end{proof}
		
		\begin{exmp}
			Figure \ref{fig:dbC30} shows the (boundary of the) asymptotic exchange graph $\b\E(C_{3,0})$ of the tube with three marked points.
			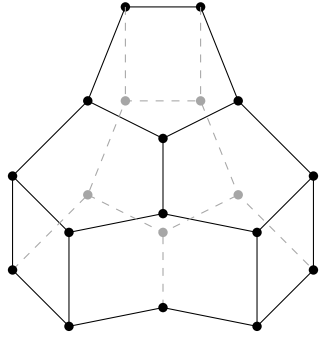
\begin{figure}[htb]
				\begin{center}
					\begin{tikzpicture}[scale = .25]
						\foreach \x in {0}
						{
							\foreach \y in {0}
							{
								\coordinate (A1) at (\x-2,\y+7);
								\coordinate (A2) at (\x+2,\y+7);
								\coordinate (B1) at (\x-4,\y+2);
								\coordinate (B2) at (\x+4,\y+2);
								\coordinate (C) at (\x+0,\y+0);
								\coordinate (D1) at (\x-8,\y-2);
								\coordinate (D2) at (\x+8,\y-2);
								\coordinate (E) at (\x+0,\y-4);
								\coordinate (F1) at (\x-5,\y-5);
								\coordinate (F2) at (\x+5,\y-5);
							
								\draw[] (A1) -- (A2) -- (B2) -- (D2) -- (F2) -- (E) -- (F1) -- (D1) -- (B1) -- cycle;
								\draw[] (B1) -- (C) -- (B2);
								\draw[] (E) -- (C);
							
								\fill (A1) circle (.25);
								\fill (A2) circle (.25);
								
								\fill (B1) circle (.25);
								\fill (B2) circle (.25);
								
								\fill (C) circle (.25);
								
								\fill (D1) circle (.25);
								\fill (D2) circle (.25);
								
								\fill (E) circle (.25);
								
								\fill (F1) circle (.25);
								\fill (F2) circle (.25);
							}
						}
						
						\foreach \x in {0}
						{
							\foreach \y in {-5}
							{
								\coordinate (bA1) at (\x-2,\y+7);
								\coordinate (bA2) at (\x+2,\y+7);
								\coordinate (bB1) at (\x-4,\y+2);
								\coordinate (bB2) at (\x+4,\y+2);
								\coordinate (bC) at (\x+0,\y+0);
								\coordinate (bD1) at (\x-8,\y-2);
								\coordinate (bD2) at (\x+8,\y-2);
								\coordinate (bE) at (\x+0,\y-4);
								\coordinate (bF1) at (\x-5,\y-5);
								\coordinate (bF2) at (\x+5,\y-5);
							
								\draw[dashed,gray!70] (bD1) -- (bB1) -- (bA1) -- (bA2) -- (bB2) -- (bD2);
								\draw[] (bD2) -- (bF2) -- (bE) -- (bF1) -- (bD1);
								\draw[dashed,gray!70] (bB1) -- (bC) -- (bB2);
								\draw[dashed,gray!70] (bE) -- (bC);
								
								\fill[gray!70] (bA1) circle (.25);
								\fill[gray!70] (bA2) circle (.25);
								
								\fill[gray!70] (bB1) circle (.25);
								\fill[gray!70] (bB2) circle (.25);
								
								\fill[gray!70] (bC) circle (.25);
								
								\fill[] (bD1) circle (.25);
								\fill[] (bD2) circle (.25);
								
								\fill[] (bE) circle (.25);
								
								\fill[] (bF1) circle (.25);
								\fill[] (bF2) circle (.25);
							}
						}
						
						\draw[dashed,gray!70] (A1) -- (bA1);
						\draw[dashed,gray!70] (A2) -- (bA2);
						\draw[] (D1) -- (bD1);
						\draw[] (D2) -- (bD2);
						\draw[] (F1) -- (bF1);
						\draw[] (F2) -- (bF2);
					\end{tikzpicture}
				\end{center}
				\caption{The asymptotic exchange graph of $C_{3,0}$, which is isomorphic to its boundary.}\label{fig:dbC30}
			\end{figure}
		\end{exmp}
		
		A direct corollary of the proof of Proposition \ref{prop:flips} and of Lemma \ref{lem:exchange} is the following.
		\begin{corol}
			Let $\d$ and $\d'$ be the boundary components of $C_{p,q}$. Then 
			$$\d\b\E(C_{p,q}) \simeq \b\E(\d) \times \b\E(\d'),\ \mbox{and } \ |\d\b\E(C_{p,q})|=4{2p-1\choose p-1} {2q-1\choose q-1}.$$
		\end{corol}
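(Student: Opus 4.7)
The plan is to establish an explicit graph isomorphism $\Phi \colon \d\b\E(C_{p,q}) \to \b\E(\d) \times \b\E(\d')$ and then read off the cardinality from the vertex counts of the two factors.

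On vertices, Lemma \ref{lem:twoarcs}(2) furnishes a unique decomposition $T = T_{\d} \sqcup T_{\d'}$ of every strictly asymptotic triangulation, with $T_{\d} \in \b\T(\d)$ and $T_{\d'} \in \b\T(\d')$; I set $\Phi(T) = (T_{\d}, T_{\d'})$. This is well defined because each asymptotic arc is intrinsically based at exactly one boundary component. For the inverse, given $(S,S') \in \b\T(\d) \times \b\T(\d')$, I must verify that $S \sqcup S'$ lies in $\b\T(C_{p,q})$ and is strictly asymptotic. Strict asymptoticity is automatic: the first step in the proof of Proposition \ref{prop:Etube} shows that any element of $\b\T(\d)$ contains at least one strictly asymptotic arc. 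Maximality then follows from Corollary \ref{corol:nbarcs} and Proposition \ref{prop:Etube}(1), since $|S|+|S'|=p+q$ is the total arc count of an asymptotic triangulation of $C_{p,q}$.

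The step demanding the most care is checking that an arc of $S$ is compatible with an arc of $S'$. Peripheral arcs can be isotoped into disjoint collar neighborhoods of their respective boundaries, so they trivially do not intersect. For strictly asymptotic arcs $\gamma \in S$ based at $m \in \d$ and $\gamma' \in S'$ based at $m' \in \d'$, I would pass to the universal cover $\mathbb{R} \times [0,1]$: lifts of $\gamma$ and $\gamma'$ emanate from opposite horizontal edges and each escapes to $+\infty$ or $-\infty$ depending on whether the arc is Pr\"ufer or adic. In each of the four combinations (Pr\"ufer--Pr\"ufer, Pr\"ufer--adic, adic--Pr\"ufer, adic--adic) one may choose disjoint simple lifts, which translates to zero geometric intersection in $C_{p,q}$ and hence compatibility.

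That $\Phi$ preserves edges follows directly from the proof of Proposition \ref{prop:flips}: mutating $T = T_{\d} \sqcup T_{\d'}$ at $\theta \in T_{\d}$ produces $\mu_{\theta}T_{\d} \sqcup T_{\d'}$, and symmetrically for $\theta \in T_{\d'}$, which is precisely the edge structure of the Cartesian product $\b\E(\d) \times \b\E(\d')$. Finally, Lemma \ref{lem:exchange} together with the preceding corollary yield $|\b\E(\d)| = 2\binom{2p-1}{p-1}$ and $|\b\E(\d')| = 2\binom{2q-1}{q-1}$, and multiplying gives $|\d\b\E(C_{p,q})| = 4\binom{2p-1}{p-1}\binom{2q-1}{q-1}$ as claimed.
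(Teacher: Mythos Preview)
Your argument is correct and is precisely the expansion of what the paper leaves implicit: the paper merely declares the result ``a direct corollary of the proof of Proposition \ref{prop:flips} and of Lemma \ref{lem:exchange}'', and you have supplied the bijection $\Phi$, its inverse, and the edge-preservation check exactly along those lines. One small omission: in verifying cross-boundary compatibility you treat peripheral--peripheral and strictly asymptotic--strictly asymptotic pairs but do not explicitly mention the mixed case (peripheral at $\d$ versus strictly asymptotic at $\d'$); this is harmless, since it is already recorded in the proof of Lemma \ref{lem:twoarcs} (``the strictly asymptotic arcs based at $m'$ are compatible with all the peripheral arcs \ldots based at marked points on $\d$''), and you could simply cite that line.
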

		In particular, the boundary $\d\b\E(C_{p,q})$ is the product of two circular associahedra. 

		\begin{exmp}
			Consider $\b\A(C_{2,1})$. It contains six strictly asymptotic arcs~: one Pr\"ufer and one adic based at one boundary component, two Pr\"ufer and two adics at the other. There are therefore twelve strictly asymptotic triangulations and the boundary $\d\b\E(C_{2,1})$ of the asymptotic exchange graph is depicted in Figure \ref{fig:dbEC21}. Note that in this case, it is a direct product of associahedra of types $B_2$ and $B_3$ respectively.
			
			\begin{figure}
				\begin{tikzpicture}[xscale = .4 ,yscale = .25]
					\draw[thick] (-2,3.5) -- (2,3.5);
					\draw[thick] (-2,-3.5) -- (2,-3.5);
					\hadic{-1}{3.5}{-1}{}
					\hadic{1}{3.5}{-3}{}
					\adic{-1}{-3.5}{-3}{}
					\adic{0}{-3.5}{-2}{}
					\adic{1}{-3.5}{-1}{}
	
					\draw[thick] (-3,0) -- (-4,-2);
					\draw[thick] (3,0) -- (4,-2);
					\draw[thick] (0,-6) -- (0,-10);

					\draw[thick] (-2-6,3.5-8) -- (2-6,3.5-8);
					\draw[thick] (-2-6,-3.5-8) -- (2-6,-3.5-8);
					\hadic{-1-6}{3.5-8}{3}{}
					\hadic{1-6}{3.5-8}{1}{}
					\adic{-1-6}{-3.5-8}{-3}{}
					\draw (-7,-11.5) .. controls (-6.5,-10) and (-5.5,-10) .. (-5,-11.5);
					\adic{1-6}{-3.5-8}{-1}{}
					
					\draw[thick] (-2+6,3.5-8) -- (2+6,3.5-8);
					\draw[thick] (-2+6,-3.5-8) -- (2+6,-3.5-8);
					\hadic{-1+6}{3.5-8}{-1}{}
					\hadic{1+6}{3.5-8}{-3}{}
					\adic{0+6}{-3.5-8}{-2}{}
					\draw (6,-11.5) .. controls (6.5,-10) and (7.5,-10) .. (8,-10);
					\draw (4,-10) .. controls (4.5,-10) and (5.5,-10) .. (6,-11.5);
					
					\draw[thick] (-2,3.5-16) -- (2,3.5-16);
					\draw[thick] (-2,-3.5-16) -- (2,-3.5-16);
					\hadic{-1}{3.5-16}{3}{}
					\hadic{1}{3.5-16}{1}{}
					\adic{-1}{-3.5-16}{-3}{}
					\adic{0}{-3.5-16}{-2}{}
					\adic{1}{-3.5-16}{-1}{}

					\draw[dashed,gray!70] (+10,-17) -- (+3,-32);
					\fill[white] (7,-24.5) circle (4);
					\draw[dashed,gray!70] (-10,-17) -- (-3,-32);
					\fill[white] (-7,-24.5) circle (4);
					
					\draw[thick] (-2.5,-17) -- (-3.5,-19);
					\draw[thick] (2.5,-17) -- (3.5,-19);
					\draw[thick] (-6,-13) -- (-6,-17);
					\draw[thick] (6,-13) -- (6,-17);
					\draw[thick] (-9,-11) -- (-10,-13);
					\draw[thick] (-9,-23) -- (-10,-25);
					\draw[thick] (9,-11) -- (10,-13);
					\draw[thick] (9,-23) -- (10,-25);
					\draw[thick] (-13,-21) -- (-13,-25);
					\draw[thick] (13,-21) -- (13,-25);
					\draw[thick,gray!70,dashed] (0,-36) -- (0,-40);

					\draw[thick] (-10,-32) -- (-3,-47);

					\draw[thick] (+10,-32) -- (+3,-47);

					\draw[thick] (-2-6,-11-8) -- (2-6,-11-8);
					\draw[thick] (-2-6,-18-8) -- (2-6,-18-8);
					\hadic{-1-6}{-11-8}{-1}{}
					\hadic{1-6}{-11-8}{-3}{}
					\adic{-1-6}{-18-8}{-3}{}
					\draw (-7,-26) .. controls (-6.5,-24.5) and (-5.5,-24.5) .. (-5,-26);
					\adic{1-6}{-18-8}{-1}{}

					\draw[thick] (-2+6,-11-8) -- (2+6,-11-8);
					\draw[thick] (-2+6,-18-8) -- (2+6,-18-8);
					\hadic{-1+6}{-11-8}{-1}{}
					\hadic{1+6}{-11-8}{-3}{}
					\adic{0+6}{-18-8}{-2}{}
					\draw (6,-26) .. controls (6.5,-24.5) and (7.5,-24.5) .. (8,-24.5);
					\draw (4,-24.5) .. controls (4.5,-24.5) and (5.5,-24.5) .. (6,-26);

					\draw[thick] (-3-12,3.5-16) -- (1-12,3.5-16);
					\draw[thick] (-3-12,-3.5-16) -- (1-12,-3.5-16);
					\hadic{-2-12}{3.5-16}{-1}{}
					\hadic{0-12}{3.5-16}{-3}{}
					\adic{-2-12}{-3.5-16}{1}{}
					\draw (-14,-19.5) .. controls (-13.5,-18) and (-12.5,-18) .. (-12,-19.5);
					\adic{0-12}{-3.5-16}{3}{}

					\draw[thick] (-3-12,3.5-30) -- (1-12,3.5-30);
					\draw[thick] (-3-12,-3.5-30) -- (1-12,-3.5-30);
					\hadic{-2-12}{3.5-30}{3}{}
					\hadic{0-12}{3.5-30}{1}{}
					\adic{-2-12}{-3.5-30}{1}{}
					\draw (-14,-33.5) .. controls (-13.5,-32) and (-12.5,-32) .. (-12,-33.5);
					\adic{0-12}{-3.5-30}{3}{}

					\draw[thick] (11,3.5-16) -- (15,3.5-16);
					\draw[thick] (11,-3.5-16) -- (15,-3.5-16);
					\hadic{12}{3.5-16}{-1}{}
					\hadic{14}{3.5-16}{-3}{}
					\adic{13}{-3.5-16}{2}{}
					\draw (13,-19.5) .. controls (13.5,-18) and (14.5,-18) .. (15,-18);
					\draw (11,-18) .. controls (11.5,-18) and (12.5,-18) .. (13,-19.5);

					\draw[thick] (11,3.5-30) -- (15,3.5-30);
					\draw[thick] (11,-3.5-30) -- (15,-3.5-30);
					\hadic{12}{3.5-30}{3}{}
					\hadic{14}{3.5-30}{1}{}
					\draw (13,-33.5) .. controls (13.5,-32) and (14.5,-32) .. (15,-32);
					\draw (11,-32) .. controls (11.5,-32) and (12.5,-32) .. (13,-33.5);
					\adic{13}{-3.5-30}{2}{}

					\draw[gray!70,thick] (-2,3.5-30) -- (2,3.5-30);
					\draw[gray!70,thick] (-2,-3.5-30) -- (2,-3.5-30);
					\hadic{-1}{3.5-30}{-1}{gray!70}
					\hadic{1}{3.5-30}{-3}{gray!70}
					\adic{-1}{-3.5-30}{1}{gray!70}
					\adic{0}{-3.5-30}{2}{gray!70}
					\adic{1}{-3.5-30}{3}{gray!70}

					\draw[thick] (-2,3.5-46) -- (2,3.5-46);
					\draw[thick] (-2,-3.5-46) -- (2,-3.5-46);
					\hadic{-1}{3.5-46}{3}{}
					\hadic{1}{3.5-46}{1}{}
					\adic{-1}{-3.5-46}{1}{}
					\adic{0}{-3.5-46}{2}{}
					\adic{1}{-3.5-46}{3}{}

				\end{tikzpicture}
				\caption{The boundary $\d\b\E(C_{2,1})$ of the asymptotic exchange graph of the annulus $C_{2,1}$.}\label{fig:dbEC21}
			\end{figure}
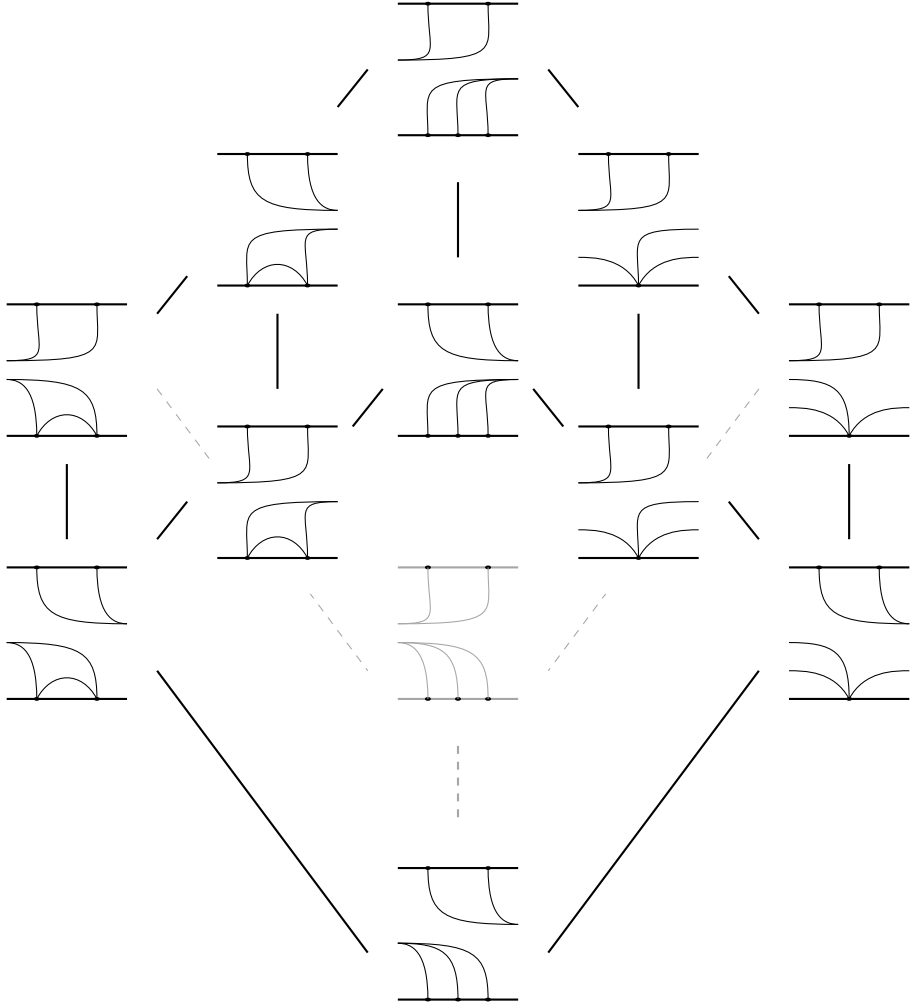
		\end{exmp}

	%

\section{Sequences of triangulations} 
	In this section we prove that asymptotic triangulations do provide a natural way to compactify the usual exchange graph of the triangulations of annulus. 
	
	Throughout this section, we assume that $p,q \geq 1$. Therefore $C_{p,q}$ is an unpunctured marked surface in the sense of \cite{FST:surfaces} and we can associate with any triangulation $T$ of $C_{p,q}$ a quiver $Q_T$ without loops nor 2-cycles, in such a way that mutations of triangulations commute with quiver mutations, see \cite{FST:surfaces} for details.
	
	\subsection{Action of $\MCG(C_{p,q})$ on triangulations}
		Given a quiver $Q$, we denote by 
		$$\TT_Q(C_{p,q}) = \ens{T \in \T(C_{p,q}) \ | \ Q_T \simeq Q}$$
		the set of all triangulations with associated quiver $Q$. 

		\begin{lem}\label{lem:finiteorbits}
			For any quiver $Q$, the mapping class group $\MCG(C_{p,q})$ acts on $\TT_Q(C_{p,q})$ and the quotient $\TT_Q(C_{p,q})/\MCG(C_{p,q})$ is finite.
		\end{lem}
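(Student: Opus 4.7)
The plan is to handle the two assertions separately: first verify that $\MCG(C_{p,q})$ preserves $\TT_Q(C_{p,q})$, and then bound the number of orbits by normalising a single bridging arc in each triangulation.

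For the action, $\MCG(C_{p,q}) = \langle D_z \rangle$ acts on $C_{p,q}$ by orientation-preserving self-homeomorphisms fixing the set of marked points $\M$. Any such homeomorphism sends isotopy classes of arcs to isotopy classes of arcs and preserves the combinatorial data (common endpoints, triangles and their orientations) used to construct the quiver $Q_T$ in \cite{FST:surfaces}. Hence $Q_{D_z T} \simeq Q_T$, so $D_z$ stabilises $\TT_Q(C_{p,q})$.

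For finiteness, I would first parameterise bridging arcs explicitly: up to isotopy, each bridging arc $\gamma$ is determined by its unordered pair of endpoints in $(\M \cap \d) \times (\M \cap \d')$, giving $pq$ possibilities, together with a \emph{winding number} $w(\gamma) \in \Z$. A convenient definition uses the universal cover $\widetilde{C}_{p,q}$, a bi-infinite strip: fix lifts of the two boundary components and of their marked points, and let $w(\gamma)$ measure the horizontal displacement between two chosen lifts of the endpoints of $\gamma$. A direct computation in the universal cover shows that $D_z$ fixes the endpoints of $\gamma$ and shifts $w(\gamma)$ by $1$. Consequently the $\MCG$-orbits of bridging arcs are in bijection with the $pq$ pairs of endpoints, and each orbit contains a unique normalised representative with $w = 0$.

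Given $T \in \T(C_{p,q})$, Lemma~\ref{lem:twoarcs} guarantees that $T$ contains a bridging arc $\gamma$. Choose the unique $n \in \Z$ with $w(D_z^n \gamma) = 0$ and replace $T$ by $D_z^n T$; we may then assume that $T$ contains a bridging arc $\gamma_0$ drawn from the finite set $\Gamma_0$ of $pq$ normalised bridging arcs. Cutting $C_{p,q}$ along $\gamma_0$ produces a disk with $p+q+2$ marked points on its boundary, and $T \setminus \ens{\gamma_0}$ corresponds to a triangulation of this disk. Since the set of triangulations of a polygon with a fixed number of vertices is finite (counted by a Catalan number), the total number of representatives is bounded by $pq$ times this finite count. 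Thus $\T(C_{p,q})/\MCG(C_{p,q})$, and a fortiori $\TT_Q(C_{p,q})/\MCG(C_{p,q})$, is finite. The main step requiring care is the precise definition of $w(\gamma)$ and the verification that $D_z$ shifts it by one; this is the only place where the specific nature of the Dehn twist is really used, and is standard once the universal cover is set up.
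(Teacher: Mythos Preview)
Your argument is correct and rests on the same core observation as the paper, namely that $\A(C_{p,q})/\MCG(C_{p,q})$ is finite. The paper's proof simply asserts that this finiteness passes from arcs to triangulations; you instead make that passage explicit by normalising one bridging arc via the winding number and then cutting along it to reduce to a polygon. This is a genuine improvement in rigour: the implication ``finitely many orbits of arcs $\Rightarrow$ finitely many orbits of $(p+q)$-element subsets'' is false for general group actions, and what actually makes it work here is precisely your observation that once a single bridging arc is pinned down the rest of the triangulation lives in a disk with only finitely many triangulations. The paper's proof and yours are thus the same strategy, but yours supplies the step the paper leaves to the reader.
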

		\begin{proof}
			It is easy to see that $\MCG(C_{p,q})$ acts on $\A(C_{p,q})$ with finitely many orbits. Moreover, as it acts by homeomorphisms, it preserves the triangulations and their quivers so that it acts on $\TT_Q(C_{p,q})$. As $\A(C_{p,q})/\MCG(C_{p,q})$ is finite, so is $\T(C_{p,q})/\MCG(C_{p,q})$ and therefore, so is $\TT_Q(C_{p,q})/\MCG(C_{p,q})$.
		\end{proof}

	\subsection{Converging sequences of triangulations}
		\begin{defi}[Converging sequences]
			Let $(T_n)_{n \geq 0} \subset \T(C_{p,q})$. We say that the sequence $(T_n)_{n \geq 0}$ \emph{converges} if there exists some integer $N \geq 0$ and a monotone map $d: \N \fl \Z$ such that for any $k \geq 0$, we have
			$$T_{N+k} = D_z^{d(k)} \cdot T_N.$$
			\begin{enumerate}
				\item If $\lim_n d(n) = + \infty$ we say that $(T_n)_{n \geq 0}$ \emph{converges positively};
				\item If $\lim_n d(n) = -\infty$ we say that $(T_n)_{n \geq 0}$ \emph{converges negatively};
				\item If $d$ is constant, we say that $(T_n)_{n \geq 0}$ \emph{stabilises (after $N$)}.
			\end{enumerate}

			The map $d$ is called a \emph{convergence} and the triangulation $T_N$ is called a \emph{blueprint} of the converging sequence $(T_n)_{n \geq 0}$.
		\end{defi}

		\begin{prop}
			Any sequence $(T_n)_{n \geq 0} \subset \T(C_{p,q})$ admits a converging subsequence.
		\end{prop}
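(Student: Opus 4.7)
\medskip
\noindent\emph{Proof plan.} The plan is to reduce the statement to the elementary fact that every sequence of integers contains a monotone subsequence. I thin $(T_n)$ to a subsequence contained in a single $\MCG(C_{p,q})$-orbit; the only remaining data is then the sequence of Dehn-twist exponents, and monotone extraction on those exponents yields the desired convergent subsequence.

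For the first reduction, note that $C_{p,q}$ is an unpunctured marked surface of finite mutation type, so only finitely many quivers arise as $Q_T$ for $T \in \T(C_{p,q})$. By pigeonhole, some stratum $\TT_Q(C_{p,q})$ contains infinitely many of the $T_n$. Lemma~\ref{lem:finiteorbits} asserts that $\TT_Q(C_{p,q})/\MCG(C_{p,q})$ is finite, so a further pigeonhole argument lets me pass to a subsequence, still denoted $(T_n)$, contained in a single $\MCG(C_{p,q})$-orbit $\MCG(C_{p,q})\cdot S$. Because every triangulation contains at least one bridging arc (cf.\ Lemma~\ref{lem:twoarcs}) and $D_z$ acts nontrivially on bridging arcs, the $\MCG(C_{p,q})$-action on $\T(C_{p,q})$ is free; hence writing $T_n = D_z^{a_n}\cdot S$ determines $a_n\in\Z$ uniquely.

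Next I apply the classical monotone subsequence theorem to $(a_n)\subset\Z$ to extract a monotone subsequence $(a_{n_k})_{k\geq 0}$, which after a further thinning is either constant or strictly monotone and unbounded. Setting $N=n_0$ and $d(k)=a_{n_k}-a_{n_0}$, I obtain a monotone map $d:\N\to\Z$ with $d(0)=0$ and
\[
T_{n_k} = D_z^{d(k)}\cdot T_{N}.
\]
According to whether $d$ is constant, tends to $+\infty$, or tends to $-\infty$, the extracted subsequence stabilises, converges positively, or converges negatively, which is exactly the condition in the definition.

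The only mildly delicate ingredient is the initial finiteness of the set of quivers arising from triangulations of $C_{p,q}$, which rests on the finite mutation type of unpunctured surfaces. Beyond that, the proof is two applications of pigeonhole together with the standard monotone subsequence theorem.
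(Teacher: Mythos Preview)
Your argument is correct and follows essentially the same route as the paper's proof: reduce first to a fixed quiver (finiteness from \cite{FST:surfaces}), then to a single $\MCG(C_{p,q})$-orbit via Lemma~\ref{lem:finiteorbits}, and finally extract a monotone sequence of Dehn-twist exponents. The only cosmetic differences are that the paper splits the last step into the cases $\im(\psi)$ finite versus infinite rather than invoking the monotone-subsequence theorem, and it does not bother to argue freeness of the $\MCG$-action (your observation is correct but unnecessary, since any choice of exponent suffices).
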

		\begin{proof}
			It was proved in \cite{FST:surfaces} that the set 
			$$\ens{Q_T \ | \ T \in \T(C_{p,q})}/\sim$$
			is finite where $\sim$ denotes the equivalence relation induced by quiver isomorphisms. Therefore, up to considering a subsequence, we can assume that there is a quiver $Q$ such that 
			$$Q_{T_n} \simeq Q$$
			for any $n \geq 1$.

			According to Lemma \ref{lem:finiteorbits}, $\TT_Q(C_{p,q})/\MCG(C_{p,q})$ is finite. Again, up to considering a subsequence, we can thus assume that there exists some $T \in \T(C_{p,q})$ and a map $\psi : \N \fl \Z$ such that 
			$$T_n = D_z^{\psi(n)} \cdot T$$
			for any $n \geq 1$.

			If $\im(\psi)$ is finite, then there is some $k \in \Z$ such that $\psi^{-1}(k)$ is infinite and therefore, considering an enumeration $(j_n)_{n \geq 0}$ of $\psi^{-1}(k)$, we get $T_{j_n} = D_z^k T$ for any $n \geq 1$. In particular, the subsequence $(T_{j_n})_{n \geq 0}$ stabilises after $j_1$ and therefore converges.

			Now assume that $\im(\psi)$ is infinite and let $(\psi(i_n))_{n \geq 0}$ be a strictly monotone sequence. Then we get $T_{i_n} = D_z^{\psi(i_n)} T$ for any $n \geq 1$. Therefore, $T_{i_n} = D_z^{\psi(i_n)-\psi(i_1)} T_{i_1}$ and $(\psi(i_n)-\psi(i_1))_{n \geq 0}$ is monotone so that $(T_{i_n})_{n \geq 0}$ is a converging subsequence.
		\end{proof}

	\subsection{Limits}
		Let $\gamma:[0,1] \fl C_{p,q}$ be a curve whose isotopy class (also denoted by $\gamma$) is in $\A(C_{p,q})$. If $\gamma(0)$ and $\gamma(1)$ lie on the same boundary component, then $D_z \cdot \gamma \simeq \gamma$ and we set $D_z^{+\infty} \cdot \gamma = \gamma$ and $D_z^{-\infty} \cdot \gamma = \gamma$.
		If $\gamma(0)$ and $\gamma(1)$ do not lie on the same boundary component, we set~:
		$$D_z^{+\infty}\cdot \gamma = \ens{\pi_{\gamma(0)},\pi_{\gamma(1)}}$$
		and 
		$$D_z^{-\infty}\cdot \gamma = \ens{\alpha_{\gamma(0)},\alpha_{\gamma(1)}}.$$

		Given a triangulation $T \in \T(C_{p,q})$, we let 
		$$D_z^{+\infty} \cdot T = \bigcup_{\gamma \in T}D_z^{+\infty} \cdot \gamma\quad \text{and}\quad D_z^{-\infty} \cdot T = \bigcup_{\gamma \in T}D_z^{-\infty} \cdot \gamma.$$

		\begin{defi}[Limit of a triangulation]
			Let $(T_n)_{n \geq 0}$ be a converging sequence of triangulations in $\T(C_{p,q})$ with convergence $d$ and blueprint $T_N$. The \emph{limit} of $(T_n)_{n \geq 0}$ is then defined as 
			$$\lim_{n \fl \infty} T_n = D_z^{\lim_n d(n)} \cdot T_N.$$
		\end{defi}
		Note that this definition does not depend on the choice of the convergence $d$ and of the blueprint $T_N$.

		Figure \ref{fig:exmplimit} shows an example of limits of sequences of triangulations obtained by applying successively the Dehn twist positively or negatively to a given triangulation.
		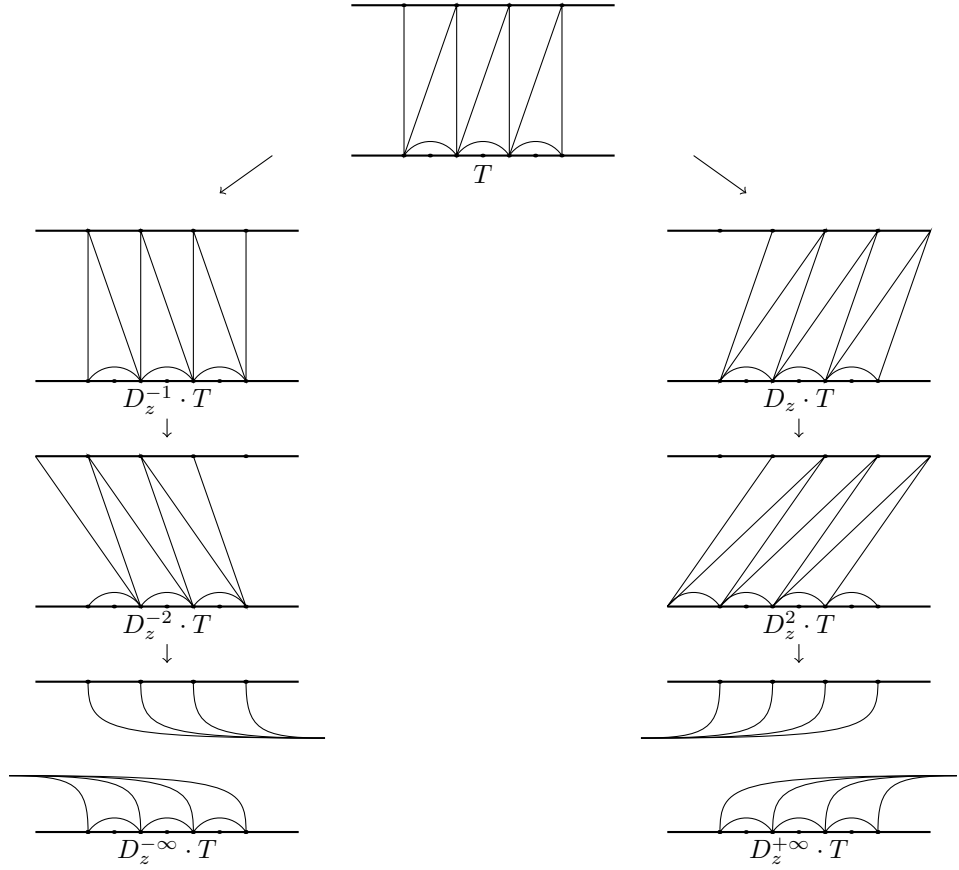
\begin{figure}[htb]
			\begin{center}
				\begin{tikzpicture}[xscale = .35, yscale = .25]
					\foreach \x in {0}
					{
						\foreach \y in {0}
						{
							\coordinate (H1) at (\x-3,\y+4);
							\coordinate (H2) at (\x-1,\y+4);
							\coordinate (H3) at (\x+1,\y+4);
							\coordinate (H4) at (\x+3,\y+4);

							\draw[thick] (\x-5,\y+4) -- (\x+5,\y+4);
							\fill (H1) circle (.1);
							\fill (H2) circle (.1);
							\fill (H3) circle (.1);
							\fill (H4) circle (.1);

							\coordinate (B-1) at (\x-5,\y-4);
							\coordinate (B0) at (\x-4,\y-4);
							\coordinate (B1) at (\x-3,\y-4);
							\coordinate (B2) at (\x-2,\y-4);
							\coordinate (B3) at (\x-1,\y-4);
							\coordinate (B4) at (\x,\y-4);
							\coordinate (B5) at (\x+1,\y-4);
							\coordinate (B6) at (\x+2,\y-4);
							\coordinate (B7) at (\x+3,\y-4);

							\draw[thick] (\x-5,\y-4) -- (\x+5,\y-4);
							\fill (B1) circle (.1);
							\fill (B2) circle (.1);
							\fill (B3) circle (.1);
							\fill (B4) circle (.1);
							\fill (B5) circle (.1);
							\fill (B6) circle (.1);
							\fill (B7) circle (.1);
							
							\draw (H1) -- (B1) -- (H2) -- (B3) -- (H3) -- (B5) -- (H4) -- (B7);
							\draw (B1) .. controls (\x-2.5,\y-3) and (\x-1.5,\y-3) .. (B3);
							\draw (B3) .. controls (\x-.5,\y-3) and (\x+.5,\y-3) .. (B5);
							\draw (B5) .. controls (\x+1.5,\y-3) and (\x+2.5,\y-3) .. (B7);

							\fill (\x,\y-5) node {$T$};
						}
					}

					\draw[->] (-8,-4) -- (-10,-6);
					\draw[->] (8,-4) -- (10,-6);

					\draw[->] (-12,-18) -- (-12,-19);
					\draw[->] (12,-18) -- (12,-19);

					\draw[->] (-12,-30) -- (-12,-31);
					\draw[->] (12,-30) -- (12,-31);

					\foreach \x in {-12}
					{
						\foreach \y in {-12}
						{
							\coordinate (H1) at (\x-3,\y+4);
							\coordinate (H2) at (\x-1,\y+4);
							\coordinate (H3) at (\x+1,\y+4);
							\coordinate (H4) at (\x+3,\y+4);

							\draw[thick] (\x-5,\y+4) -- (\x+5,\y+4);
							\fill (H1) circle (.1);
							\fill (H2) circle (.1);
							\fill (H3) circle (.1);
							\fill (H4) circle (.1);

							\coordinate (B-1) at (\x-5,\y-4);
							\coordinate (B0) at (\x-4,\y-4);
							\coordinate (B1) at (\x-3,\y-4);
							\coordinate (B2) at (\x-2,\y-4);
							\coordinate (B3) at (\x-1,\y-4);
							\coordinate (B4) at (\x,\y-4);
							\coordinate (B5) at (\x+1,\y-4);
							\coordinate (B6) at (\x+2,\y-4);
							\coordinate (B7) at (\x+3,\y-4);

							\draw[thick] (\x-5,\y-4) -- (\x+5,\y-4);
							\fill (B1) circle (.1);
							\fill (B2) circle (.1);
							\fill (B3) circle (.1);
							\fill (B4) circle (.1);
							\fill (B5) circle (.1);
							\fill (B6) circle (.1);
							\fill (B7) circle (.1);
							
							\draw (B1) -- (H1) -- (B3) -- (H2) -- (B5) -- (H3) -- (B7) -- (H4);
							\draw (B1) .. controls (\x-2.5,\y-3) and (\x-1.5,\y-3) .. (B3);
							\draw (B3) .. controls (\x-.5,\y-3) and (\x+.5,\y-3) .. (B5);
							\draw (B5) .. controls (\x+1.5,\y-3) and (\x+2.5,\y-3) .. (B7);

							\fill (\x,\y-5) node {$D_z^{-1} \cdot T$};
						}
					}

					\foreach \x in {-12}
					{
						\foreach \y in {-24}
						{
							\coordinate (H0) at (\x-5,\y+4);
							\coordinate (H1) at (\x-3,\y+4);
							\coordinate (H2) at (\x-1,\y+4);
							\coordinate (H3) at (\x+1,\y+4);
							\coordinate (H4) at (\x+3,\y+4);

							\draw[thick] (\x-5,\y+4) -- (\x+5,\y+4);
							\fill (H1) circle (.1);
							\fill (H2) circle (.1);
							\fill (H3) circle (.1);
							\fill (H4) circle (.1);

							\coordinate (B-1) at (\x-5,\y-4);
							\coordinate (B0) at (\x-4,\y-4);
							\coordinate (B1) at (\x-3,\y-4);
							\coordinate (B2) at (\x-2,\y-4);
							\coordinate (B3) at (\x-1,\y-4);
							\coordinate (B4) at (\x,\y-4);
							\coordinate (B5) at (\x+1,\y-4);
							\coordinate (B6) at (\x+2,\y-4);
							\coordinate (B7) at (\x+3,\y-4);

							\draw[thick] (\x-5,\y-4) -- (\x+5,\y-4);
							\fill (B1) circle (.1);
							\fill (B2) circle (.1);
							\fill (B3) circle (.1);
							\fill (B4) circle (.1);
							\fill (B5) circle (.1);
							\fill (B6) circle (.1);
							\fill (B7) circle (.1);
							
							\draw (H0) -- (B3) -- (H1) -- (B5) -- (H2) -- (B7) -- (H3);
							\draw (B1) .. controls (\x-2.5,\y-3) and (\x-1.5,\y-3) .. (B3);
							\draw (B3) .. controls (\x-.5,\y-3) and (\x+.5,\y-3) .. (B5);
							\draw (B5) .. controls (\x+1.5,\y-3) and (\x+2.5,\y-3) .. (B7);

							\fill (\x,\y-5) node {$D_z^{-2} \cdot T$};
						}
					}

					\foreach \x in {-12}
					{
						\foreach \y in {-36}
						{
							\coordinate (H0) at (\x-5,\y+4);
							\coordinate (H1) at (\x-3,\y+4);
							\coordinate (H2) at (\x-1,\y+4);
							\coordinate (H3) at (\x+1,\y+4);
							\coordinate (H4) at (\x+3,\y+4);

							\draw[thick] (\x-5,\y+4) -- (\x+5,\y+4);
							\fill (H1) circle (.1);
							\fill (H2) circle (.1);
							\fill (H3) circle (.1);
							\fill (H4) circle (.1);

							\coordinate (B-1) at (\x-5,\y-4);
							\coordinate (B0) at (\x-4,\y-4);
							\coordinate (B1) at (\x-3,\y-4);
							\coordinate (B2) at (\x-2,\y-4);
							\coordinate (B3) at (\x-1,\y-4);
							\coordinate (B4) at (\x,\y-4);
							\coordinate (B5) at (\x+1,\y-4);
							\coordinate (B6) at (\x+2,\y-4);
							\coordinate (B7) at (\x+3,\y-4);

							\draw[thick] (\x-5,\y-4) -- (\x+5,\y-4);
							\fill (B1) circle (.1);
							\fill (B2) circle (.1);
							\fill (B3) circle (.1);
							\fill (B4) circle (.1);
							\fill (B5) circle (.1);
							\fill (B6) circle (.1);
							\fill (B7) circle (.1);
							
							\hadic{\x-3}{\y+4}{9}{}
							\hadic{\x-1}{\y+4}{7}{}
							\hadic{\x+1}{\y+4}{5}{}
							\hadic{\x+3}{\y+4}{3}{}

							\adic{\x-3}{\y-4}{3}{}
							\adic{\x-1}{\y-4}{5}{}
							\adic{\x+1}{\y-4}{7}{}
							\adic{\x+3}{\y-4}{9}{}
							\draw (B1) .. controls (\x-2.5,\y-3) and (\x-1.5,\y-3) .. (B3);
							\draw (B3) .. controls (\x-.5,\y-3) and (\x+.5,\y-3) .. (B5);
							\draw (B5) .. controls (\x+1.5,\y-3) and (\x+2.5,\y-3) .. (B7);

							\fill (\x,\y-5) node {$D_z^{-\infty} \cdot T$};
						}
					}

					\foreach \x in {12}
					{
						\foreach \y in {-12}
						{
							\coordinate (H1) at (\x-3,\y+4);
							\coordinate (H2) at (\x-1,\y+4);
							\coordinate (H3) at (\x+1,\y+4);
							\coordinate (H4) at (\x+3,\y+4);
							\coordinate (H5) at (\x+5,\y+4);

							\draw[thick] (\x-5,\y+4) -- (\x+5,\y+4);
							\fill (H1) circle (.1);
							\fill (H2) circle (.1);
							\fill (H3) circle (.1);
							\fill (H4) circle (.1);

							\coordinate (B-1) at (\x-5,\y-4);
							\coordinate (B0) at (\x-4,\y-4);
							\coordinate (B1) at (\x-3,\y-4);
							\coordinate (B2) at (\x-2,\y-4);
							\coordinate (B3) at (\x-1,\y-4);
							\coordinate (B4) at (\x,\y-4);
							\coordinate (B5) at (\x+1,\y-4);
							\coordinate (B6) at (\x+2,\y-4);
							\coordinate (B7) at (\x+3,\y-4);

							\draw[thick] (\x-5,\y-4) -- (\x+5,\y-4);
							\fill (B1) circle (.1);
							\fill (B2) circle (.1);
							\fill (B3) circle (.1);
							\fill (B4) circle (.1);
							\fill (B5) circle (.1);
							\fill (B6) circle (.1);
							\fill (B7) circle (.1);
							
							\draw (H2) -- (B1) -- (H3) -- (B3) -- (H4) -- (B5) -- (H5) -- (B7);
							\draw (B1) .. controls (\x-2.5,\y-3) and (\x-1.5,\y-3) .. (B3);
							\draw (B3) .. controls (\x-.5,\y-3) and (\x+.5,\y-3) .. (B5);
							\draw (B5) .. controls (\x+1.5,\y-3) and (\x+2.5,\y-3) .. (B7);

							\fill (\x,\y-5) node {$D_z \cdot T$};
						}
					}

					\foreach \x in {12}
					{
						\foreach \y in {-24}
						{
							\coordinate (H0) at (\x-5,\y+4);
							\coordinate (H1) at (\x-3,\y+4);
							\coordinate (H2) at (\x-1,\y+4);
							\coordinate (H3) at (\x+1,\y+4);
							\coordinate (H4) at (\x+3,\y+4);
							\coordinate (H5) at (\x+5,\y+4);

							\draw[thick] (\x-5,\y+4) -- (\x+5,\y+4);
							\fill (H1) circle (.1);
							\fill (H2) circle (.1);
							\fill (H3) circle (.1);
							\fill (H4) circle (.1);

							\coordinate (B-1) at (\x-5,\y-4);
							\coordinate (B0) at (\x-4,\y-4);
							\coordinate (B1) at (\x-3,\y-4);
							\coordinate (B2) at (\x-2,\y-4);
							\coordinate (B3) at (\x-1,\y-4);
							\coordinate (B4) at (\x,\y-4);
							\coordinate (B5) at (\x+1,\y-4);
							\coordinate (B6) at (\x+2,\y-4);
							\coordinate (B7) at (\x+3,\y-4);

							\draw[thick] (\x-5,\y-4) -- (\x+5,\y-4);
							\fill (B1) circle (.1);
							\fill (B2) circle (.1);
							\fill (B3) circle (.1);
							\fill (B4) circle (.1);
							\fill (B5) circle (.1);
							\fill (B6) circle (.1);
							\fill (B7) circle (.1);
							
							\draw (H2) -- (B-1) -- (H3) -- (B1) -- (H4) -- (B3) -- (H5) -- (B5);
							\draw (B-1) .. controls (\x-4.5,\y-3) and (\x-3.5,\y-3) .. (B1);
							\draw (B1) .. controls (\x-2.5,\y-3) and (\x-1.5,\y-3) .. (B3);
							\draw (B3) .. controls (\x-.5,\y-3) and (\x+.5,\y-3) .. (B5);
							\draw (B5) .. controls (\x+1.5,\y-3) and (\x+2.5,\y-3) .. (B7);

							\fill (\x,\y-5) node {$D_z^2 \cdot T$};
						}
					}

					\foreach \x in {12}
					{
						\foreach \y in {-36}
						{
							\coordinate (H0) at (\x-5,\y+4);
							\coordinate (H1) at (\x-3,\y+4);
							\coordinate (H2) at (\x-1,\y+4);
							\coordinate (H3) at (\x+1,\y+4);
							\coordinate (H4) at (\x+3,\y+4);

							\draw[thick] (\x-5,\y+4) -- (\x+5,\y+4);
							\fill (H1) circle (.1);
							\fill (H2) circle (.1);
							\fill (H3) circle (.1);
							\fill (H4) circle (.1);

							\coordinate (B-1) at (\x-5,\y-4);
							\coordinate (B0) at (\x-4,\y-4);
							\coordinate (B1) at (\x-3,\y-4);
							\coordinate (B2) at (\x-2,\y-4);
							\coordinate (B3) at (\x-1,\y-4);
							\coordinate (B4) at (\x,\y-4);
							\coordinate (B5) at (\x+1,\y-4);
							\coordinate (B6) at (\x+2,\y-4);
							\coordinate (B7) at (\x+3,\y-4);

							\draw[thick] (\x-5,\y-4) -- (\x+5,\y-4);
							\fill (B1) circle (.1);
							\fill (B2) circle (.1);
							\fill (B3) circle (.1);
							\fill (B4) circle (.1);
							\fill (B5) circle (.1);
							\fill (B6) circle (.1);
							\fill (B7) circle (.1);
							
							\hPrufer{\x-3}{\y+4}{3}{}
							\hPrufer{\x-1}{\y+4}{5}{}
							\hPrufer{\x+1}{\y+4}{7}{}
							\hPrufer{\x+3}{\y+4}{9}{}

							\Prufer{\x-3}{\y-4}{9}{}
							\Prufer{\x-1}{\y-4}{7}{}
							\Prufer{\x+1}{\y-4}{5}{}
							\Prufer{\x+3}{\y-4}{3}{}
							\draw (B1) .. controls (\x-2.5,\y-3) and (\x-1.5,\y-3) .. (B3);
							\draw (B3) .. controls (\x-.5,\y-3) and (\x+.5,\y-3) .. (B5);
							\draw (B5) .. controls (\x+1.5,\y-3) and (\x+2.5,\y-3) .. (B7);

							\fill (\x,\y-5) node {$D_z^{+\infty} \cdot T$};
						}
					}

				\end{tikzpicture}
			\end{center}
			\caption{Limits of a sequence of triangulations obtained by applying Dehn twists.}\label{fig:exmplimit}
		\end{figure}

		\begin{lem}\label{lem:limstable}
			Let $(T_n)_{n \geq 0}$ be a sequence of triangulations which stabilises after $N \geq 0$. Let $d$ be a convergence of $(T_n)_{n \geq 0}$ and fix $k \in \Z$ such that $d(n)=k$ for any $n \geq N$. Then 
			$$\lim_n T_n = D_z^k \cdot T_N.$$
		\end{lem}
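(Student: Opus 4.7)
The plan is to unwind the two definitions involved and observe that they match up. By the hypothesis that $(T_n)_{n \geq 0}$ stabilises after $N$, the convergence $d : \N \longrightarrow \Z$ is constant on $\N$; combined with the assumption that $d(n)=k$ for every $n \geq N$, this forces $d$ to be the constant function with value $k$, and in particular $\lim_n d(n) = k \in \Z$ (so the limit is neither $+\infty$ nor $-\infty$).

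Substituting into the definition of the limit of a converging sequence of triangulations then immediately yields
$$\lim_n T_n \;=\; D_z^{\lim_n d(n)} \cdot T_N \;=\; D_z^k \cdot T_N,$$
where $D_z^k$ is to be read as the usual $k$-th iterate of the Dehn twist acting on $T_N$ as a homeomorphism, not as one of the asymptotic operators $D_z^{\pm\infty}$ defined earlier for bridging arcs (those operators only enter when $\lim_n d(n) = \pm\infty$).

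Since the argument is a direct chase through the definitions, there is no genuine obstacle. The only point worth flagging is a consistency remark: the defining relation $T_{N+j} = D_z^{d(j)} \cdot T_N = D_z^k \cdot T_N$, applied with $j=0$, forces $T_N$ to be fixed by $D_z^k$, so in fact $D_z^k \cdot T_N = T_N$, and the asserted limit is simply the common value $T_N = T_{N+1} = \cdots$ of the stabilised tail.
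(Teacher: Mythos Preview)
Your proof is correct and follows essentially the same route as the paper: both arguments simply unfold the definition of a stabilising sequence and of the limit to read off $\lim_n T_n = D_z^{\lim_n d(n)} \cdot T_N = D_z^k \cdot T_N$. Your closing consistency remark (that $j=0$ forces $D_z^k \cdot T_N = T_N$) is a correct additional observation not made in the paper, but it is not needed for the lemma itself.
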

		\begin{proof}
			Assume that $(T_n)_{n \geq 0}$ stabilises after $N$ and fix a convergence $d$ and an integer $k \in \Z$ such that $d(n)=k$ for any $n \geq N$. Then, $T_n = D_z^{k} T_N$ for any $n \geq N$ and thus by definition $\lim_n T_n = D_z^{k} \cdot T_N$.
		\end{proof}

		\begin{theorem}
			Let $(T_n)_{n \geq 0} \subset \T(C_{p,q})$ be a converging sequence of triangulations. Then $\lim_n T_n \in \b\T(C_{p,q})$.
		\end{theorem}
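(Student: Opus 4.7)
The plan splits naturally according to the convergence type. In the stabilising case, Lemma \ref{lem:limstable} immediately gives $\lim_n T_n = D_z^k \cdot T_N \in \T(C_{p,q}) \subset \b\T(C_{p,q})$, since Dehn twists act on $\T(C_{p,q})$. The substantial case is therefore non-stabilising convergence; by symmetry I would assume $d(n) \to +\infty$ and write $L := \lim_n T_n = D_z^{+\infty} \cdot T_N$. Since $D_z^{+\infty}$ fixes each peripheral arc of $T_N$ and replaces every bridging arc $\gamma$ by the pair of Pr\"ufer curves $\{\pi_{\gamma(0)},\pi_{\gamma(1)}\}$ at its endpoints, $L$ contains no bridging arc and decomposes naturally as $L = L_\d \sqcup L_{\d'}$, sorted according to the boundary component on which each asymptotic arc is based. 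The plan is to prove that $L_\d \in \b\T(\d)$ and $L_{\d'} \in \b\T(\d')$ separately, and to deduce $L \in \b\T(C_{p,q})$ from this.

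For the deduction, each $L_\beta$ contains at least one Pr\"ufer curve because Lemma \ref{lem:twoarcs}(1) guarantees that $T_N$ has at least two bridging arcs, contributing a Pr\"ufer curve at each boundary. Since bridging arcs are incompatible with every strictly asymptotic arc (as noted after the definition of the asymptotic exchange graph), no bridging arc can be added to $L$ while preserving compatibility; combined with the mutual compatibility of $L_\d$ and $L_{\d'}$ (their members can be realised on opposite sides of the core curve $z$) and with the separate maximality statements for $L_\d$ and $L_{\d'}$, this yields maximality of $L$ in $\b\A(C_{p,q})$. To establish $L_\d \in \b\T(\d)$, the core point in checking pairwise compatibility is the Pr\"ufer-peripheral case: a Pr\"ufer $\pi_m \in L_\d$ must be compatible with every peripheral $P \in L_\d$, since otherwise the bridging arc of $T_N$ with $m$ as its $\d$-endpoint would also cross $P$, contradicting $T_N \in \T(C_{p,q})$.

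For the maximality of $L_\d$, given any $\theta \in \b\A(\d) \setminus L_\d$ I would split into three cases. If $\theta = \pi_m$ with $m$ not an endpoint of any bridging arc of $T_N$, then $m$ is strictly enclosed by some peripheral arc $P \in L_\d$ and $\pi_m$ must cross $P$. If $\theta = \alpha_m$, then $\theta$ is incompatible with every Pr\"ufer curve in $L_\d$ since the two spirals wind in opposite senses. If $\theta$ is a peripheral arc on $\d$ not belonging to $T_N$, then $\theta$ is incompatible either with another peripheral of $T_N$ (which lies in $L_\d$) or with some bridging arc of $T_N$; in the latter subcase the $\d$-endpoint $m$ of that bridging arc must be strictly enclosed by $\theta$, so $\pi_m \in L_\d$ crosses $\theta$. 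The main obstacle is this last subcase: translating an essential crossing between a bridging arc and a peripheral arc into an essential crossing between the associated Pr\"ufer curve and that peripheral arc. The underlying geometric fact, which simultaneously governs the Pr\"ufer-peripheral compatibility inside $L_\d$ and the Pr\"ufer-peripheral incompatibility required for maximality, is that a bridging arc and a peripheral arc on $\d$ are incompatible if and only if the $\d$-endpoint of the bridging arc is strictly enclosed by the peripheral arc.
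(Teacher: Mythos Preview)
Your proposal is correct and takes a genuinely different route from the paper. The paper, after dispatching the stabilising case exactly as you do, proceeds by proving two things about $L = D_z^{+\infty}\cdot T_N$: (a) pairwise compatibility of its asymptotic arcs, and (b) that $|L| = p+q$, the latter by cutting the annulus along a bridging arc of $T_N$ to obtain a $(p+q+2)$-gon and counting. Maximality then follows implicitly from Corollary~\ref{corol:nbarcs}, since any compatible collection of $p+q$ distinct asymptotic arcs extends to an asymptotic triangulation, which again has $p+q$ elements. Your argument instead decomposes $L = L_\d \sqcup L_{\d'}$ and proves each piece lies in $\b\T(\beta)$ by a direct case analysis on what could be added, hinging on the geometric equivalence ``a bridging arc crosses a peripheral arc on $\d$ iff its $\d$-endpoint is strictly enclosed'' (equivalently, ``$\pi_m$ crosses a peripheral iff $m$ is strictly enclosed''). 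This buys you independence from the cardinality result and sidesteps the somewhat terse reduction in the paper's part (b), where the claim ``it is enough to show this when $T_N$ has no peripheral arcs'' is asserted without justification. The paper's counting approach is shorter once Corollary~\ref{corol:nbarcs} is in hand, but your direct maximality check is more self-contained and makes the underlying geometry explicit.
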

		\begin{proof}
			If $(T_n)_{n \geq 0}$ stabilises after a certain rank, then it follows from Lemma \ref{lem:limstable} that $\lim_n T_n \in \T(C_{p,q})$. 

			Otherwise $(T_n)$ has a strictly monotone convergence $d$ and by symmetry, we can assume that this convergence is strictly increasing. 
			Let $T_N$ be a blueprint of $(T_n)$. 
			Let $\gamma$ be an arc of $T_N$. If $\gamma$ is peripheral, then $\gamma\in \lim_n T_n$. And if $\gamma$ is bridging, i.e. $\gamma=(m_i,m_j')$ with $m_i$ on the boundary $\d$ and $m_j'$ on the boundary $\d'$, then $\pi_{m_i}$, $\pi_{m_j'}$ are arcs of $\lim_n T_n$. We have to show a) that arcs of $\lim_n T_n$ pairwise do not cross and that b) the number of arcs in $\lim_n T_n$ is correct, i.e. that it is equal to $p+q$.
			
			a) Non-crossing property.\\
			Let $\beta$, $\gamma$ be arcs in $T_N$. In case they are both peripheral, then $D_z^{+\infty} \beta=\beta$ and $D_z^{+\infty}\gamma=\gamma$ and hence $\beta$ and $\gamma$ are non crossing arcs in $\lim_n T_n$. Let $\beta$ be peripheral and $\gamma$ bridging, so $D_z^{+\infty}\gamma=\{\pi_{m_i},\pi_{m_j'}\}$. 
			Without loss of generality, let $\beta$ lie on the boundary $\d$, i.e. $\beta=(m_k,m_l)$ for $k,l\in\{0,\dots,p-1\}$. As the 
			two arcs do not cross in $T_N$ we have that $i \notin[k,l]$ (interval mod $p$). In particular, 
			$\beta=D_z^{+\infty}\beta$ and $\pi_i\in D_z^{+\infty}\gamma$ do not cross. $\beta$ and 
			$\pi_{m_j'}$ clearly do not cross as they live near opposite boundaries. 
			Last, assume that $\beta=(m_i,m_j')$ and $\gamma=(m_k,m_l')$ are both bridging. 
			Then $D_z^{+\infty}\beta \cup D_z^{+\infty}\gamma$ is a collection of Pr\"ufer arcs (of both 
			boundary components). In particular, they do not cross. 

			\noindent
			b) $\lim_n T_n$ has exactly $p+q$ arcs. \\
			First note that the number of peripheral arcs of $T_N$ and of $\lim_n T_n$ is the same. 
			It only remains to see that the number of bridging arcs of $T_N$ is equal to the number of 
			Pr\"ufer arcs in $\lim_n T_n$. It is enough to show this for the case where $T_N$ does not 
			contain any peripheral arcs. By Lemma~\ref{lem:twoarcs} $T_N$ contains a bridging arc. 
			Up to relabeling the marked points, we can assume that $T_N$ contains the arc $m_1,m_1'$. 
			We cut the annulus 
			along this arc and consider the resulting cylindrical region in the plane: 
			\\
			\begin{center}
				\begin{tikzpicture}[scale = .5]
					\tikzstyle{every node} = [font = \small]
					\foreach \x in {0}
					{
						\foreach \y in {-8}
						{
							\draw[<-] (\x-4,\y+1.6) -- (\x+4,\y+1.6);
							\draw[->] (\x-4,\y-1.6) -- (\x+4,\y-1.6);

							\foreach \t in {-3,-2,...,3}
							{
								\fill (\x+\t,\y+1.6) circle (.1);
								\fill (\x+\t,\y-1.6) circle (.1);
							}

							\fill (\x-3,\y+2) node [above] {\tiny $m_1'$};
							\fill (\x-2,\y+2) node [above] {\tiny $m_q'$};
							\fill (\x ,\y+2) node [above] {\tiny $\cdots$};
							\fill (\x+2,\y+2) node [above] {\tiny $m_2'$};
							\fill (\x+3,\y+2) node [above] {\tiny $m_1'$};

							\fill (\x-3,\y-2) node [below] {\tiny $m_1$};
							\fill (\x-2,\y-2) node [below] {\tiny $m_2$};
							\fill (\x,\y-2) node [below] {\tiny $\cdots$};
							\fill (\x,\y-2) node [below] {\tiny $\cdots$};
							\fill (\x+2,\y-2) node [below] {\tiny $m_p$};
							\fill (\x+3,\y-2) node [below] {\tiny $m_1$};

							\draw (\x-3,\y+1.6) -- (\x-3,\y-1.6);
							\draw (\x+3,\y+1.6) -- (\x+3,\y-1.6);

						}
					}
				\end{tikzpicture}
			\end{center}
			
			It is a polygon with $p+q+2$ vertices. Since all arcs of $T_N$ are bridging, every vertex 
			of this polygon is incident with at least one arc. In particular, $\lim_n T_n$ consists of all 
			the Pr\"ufer arcs and thus has $p+q$ elements. 
			On the other hand, the number of arcs in $T_N$ is $1+(p+q-1)$ as it consists of the arc 
			$(m_1,m_1')$ as well as of all the arcs of a triangulation of a $p+q+2$-gon. 
		\end{proof}

		\begin{rmq}
			Not every (strictly) asymptotic triangulation is the limit of a converging sequence of triangulations. Consider for instance the annulus $C_{1,1}$ with one point $o$ on a boundary component and one point $\iota$ on the other. Then for any triangulation $T \in \T(C_{1,1})$, we have
			$$D_z^{+\infty}\cdot T = \ens{\pi_o,\pi_\iota} \text{ and }D_z^{-\infty}\cdot T = \ens{\alpha_o,\alpha_\iota}$$
			so that 
			$$\ens{\lim_n T_n \ | \ (T_n)_n \subset \T(C_{1,1}) \text{ converges} } = \T(C_{1,1}) \sqcup \ens{\ens{\alpha_o,\alpha_\iota},\ens{\pi_o,\pi_\iota}}$$
			whereas 
			$$\b\T(C_{1,1}) = \T(C_{1,1}) \sqcup \ens{\ens{\alpha_o,\alpha_\iota},\ens{\pi_o,\pi_\iota},\ens{\pi_o,\alpha_\iota},\ens{\alpha_o,\pi_\iota}}.$$

			More generally, for an arbitrary annulus $C_{p,q}$, it follows from the definitions that the limit of a converging sequence of triangulations in $\T(C_{p,q})$ does not contain simultaneously Pr\"ufer curves and adic curves whereas there are asymptotic triangulations with both.
		\end{rmq}

\section*{Acknowledgements}
	The authors would like to thank Aslak Buan and Robert Marsh for helpful discussions. The second author would also like to thank the first author for her kind hospitality during his stay at Karl-Franzens-Universit{\"a}t Graz where this work was initiated.


\providecommand{\bysame}{\leavevmode\hbox to3em{\hrulefill}\thinspace}
\providecommand{\MR}{\relax\ifhmode\unskip\space\fi MR }
\providecommand{\MRhref}[2]{%
  \href{http://www.ams.org/mathscinet-getitem?mr=#1}{#2}
}
\providecommand{\href}[2]{#2}

\end{document}